\def\dbE{{\mathbb{E}}}
\def\dbF{{\mathbb{F}}}
\def\dbH{{\mathbb{H}}}
\def\dbI{{\mathbb{I}}}
\def\dbP{{\mathbb{P}}}
\def\dbR{{\mathbb{R}}}
\def\dbS{{\mathbb{S}}}
\def\d{\delta}
\def\e{\varepsilon}
\def\k{\kappa}
\def\t{\tau}
\def\f{\varphi}
\def\Th{\Theta}
\def\3n{\negthinspace \negthinspace \negthinspace }
\def\2n{\negthinspace \negthinspace }
\def\1n{\negthinspace }
\def\ns{\noalign{\smallskip} }
\def\ds{\displaystyle}
\def\G{\Gamma}
\def\D{\Delta}
\def\Th{\Theta}
\def\Om{\Omega}
\def\cA{{\cal A}}
\def\cC{{\cal C}}
\def\cF{{\cal F}}
\def\cG{{\cal G}}
\def\cP{{\cal P}}
\def\cQ{{\cal Q}}
\def\cR{{\cal R}}
\def\cU{{\cal U}}
\def\mE{{\mathbb{E}}}
\def\ms{\medskip}
\def\q{\quad}
\def\qq{\qquad}
\def\hb{\hbox}
\def\liminf{\mathop{\underline{\rm lim}}}
\def\esssup{\mathop{\rm esssup}}
\def\wt{\widetilde}
\def\cd{\cdot}
\def\deq{\mathop{\buildrel\D\over=}}
\def\({\Big (}
\def\){\Big )}
\def\[{\Big[}
\def\]{\Big]}
\def\={\buildrel \triangle \over =}
\def\ee{\end{equation}}
\def\bea{\begin{eqnarray}}
\def\eea{\end{eqnarray}}
\def\bt{\begin{theorem}}
\def\et{\end{theorem}}
\def\bc{\begin{corollary}}
\def\ec{\end{corollary}}
\def\bl{\begin{lemma}}
\def\el{\end{lemma}}
\def\bp{\begin{proposition}}
\def\ep{\end{proposition}}
\def\br{\begin{remark}}
\def\er{\end{remark}}
\def\ba{\begin{array}}
\def\ea{\end{array}}
\def\bde{\begin{definition}}
\def\ede{\end{definition}}
\newtheorem{lemma}{Lemma}[section]
\newtheorem{remark}{Remark}[section]
\newtheorem{theorem}{Theorem}[section]
\newtheorem{corollary}{Corollary}[section]
\newtheorem{definition}{Definition}[section]
\newtheorem{proposition}{Proposition}[section]
\def\punct{}
\newtheoremstyle{dotless}{}{}{\rm}{}{\bf}{\punct}{.5em}{}
\theoremstyle{dotless}
\newenvironment{taggedassumption}[1]
 {\taggedassumptionx}
 {\endtaggedassumptionx}
\title{\bf
Solvability of Equilibrium  Riccati Equations:
A Direct Approach\footnote{Although we  provide a new approach to the ERE associated time-inconsistent stochastic LQ controls, the main result of this manuscript has been obtained by Yong \cite{Yong-2017}. Thus, we just  maintain it  as a preprint for the interested readers, and will not publish it officially.}
}
\author{Bowen Ma\thanks{College of Mathematics and Physics, Chengdu University of Technology, Chengdu, 610059, China
(Email: {\tt albertmabowen@gmail.com}). }
~~and~~
Hanxiao Wang\thanks{Corresponding author. School of Mathematical Sciences, Shenzhen University, Shenzhen,
518060, China (Email: {\tt hxwang@szu.edu.cn}). This author is supported in part by NSFC Grant 12201424,
Guangdong Basic and Applied Basic Research Foundation 2023A1515012104,
and the Science and Technology Program of Shenzhen RCBS20231211090537064.
During preparing this work, my daughter was born, whose pet name is ``$\pi\pi$".
Many thanks for my wife's support and understanding.
}
}
\date{\today}
\begin{document}
\maketitle

\noindent \textbf{Abstract.}
The solvability of  equilibrium  Riccati equations (EREs) plays a central role in the study of
time-inconsistent stochastic linear-quadratic optimal control problems,
because it paves the way to constructing a closed-loop equilibrium strategy.
Under the standard conditions, Yong \cite{Yong-2017}  established its well-posedness
by introducing the well-known multi-person differential game method.
However, this method  depends on the  dynamic programming principle (DPP) of
the sophisticated problems on every subinterval, and thus is essentially a control theory approach.
In this paper, we shall give a new and more direct proof, in which the DPP is no longer needed.
We first establish a priori estimates for the ERE in the case of smooth coefficients.
Using this estimate, we then demonstrate both the local and global solvability of the ERE
by constructing an appropriate Picard iteration sequence,
which actually provides a numerical algorithm.
Additionally, a mollification method is employed to handle the case with non-smooth coefficients.

\medskip
\noindent \textbf{Keywords.}
stochastic linear-quadratic optimal control,
 time-inconsistency, equilibrium strategy, equilibrium Riccati equation.

 \medskip
\noindent \textbf{AMS 2020 Mathematics Subject Classification.} 49N10, 49N70, 93E20, 91A15.

 \medskip

\section{Introduction}\label{sec-intro}

Time-inconsistent stochastic optimal control theory has attracted  strong attentions
in the last two decades for its wide application, especially in behavioral finance.
An important special case is the so-called  time-inconsistent stochastic linear-quadratic (SLQ, for short) optimal control
problem, whose state equation and cost functional can be described by
\begin{equation}\label{state}
\left\{\begin{aligned}
&dX(s)=[A(s)X(s)+B(s)u(s)]ds  \\
&\qq\qq +[C(s)X(s)+D(s)u(s)]dW(s), \q s\in [t,T],\\
&X(t)=x,
\end{aligned}\right.
\end{equation}
and
\begin{align}
J(t,x;u(\cd))=&\mE_t\Big[ \int_{t}^{T} \big(\langle Q(t,s)X(s),X(s) \rangle +\langle R(t,s)u(s),u(s) \rangle \big)ds
+  \langle G(t)X(T),X(T) \rangle  \Big],\label{cost}
\end{align}
respectively.
In the above, $T>0$ denotes a fixed time horizon,  $W(\cd)$ represents a one-dimensional standard Brownian motion defined on the
complete filtered probability space $(\Omega,\cF,\dbF,\dbP)$ with $\dbF=\{\cF_t\}_{t\geq 0}$ being the augmented natural filtration generated by $W(\cd)$,
and $\dbE_t[\,\cd\,]=\dbE[\,\cd\,|\cF_t]$ is the conditional expectation operator.
Moreover, $A(\cd),C(\cd):[0,T]\to\dbR^{n\times n}$ and $B(\cd),D(\cd):[0,T]\to\dbR^{n\times m}$
are given coefficients, and  $Q(\cd,\cd):[0,T]^2\to\dbS^{n}$, $R(\cd,\cd):[0,T]^2\to\dbS^{m}$, and $\G(\cd):[0,T]\to\dbS^{n}$
are the weighting matrices. For any given {\it initial pair} $(t,x)\in[0,T)\times L^2_{\cF_t}(\Omega;\,\dbR^n)$,
one hopes to find a control $u(\cd) \in \cU[t,T]= L_{\dbF}^2(t,T;\,\dbR^{m})$
to minimize the cost functional \eqref{cost}.

\medskip

Compared with the well-studied classical SLQ optimal control problems
(see the books \cite[Chapter 6]{Yong-Zhou1999} and \cite{Sun-Yong-2020}, for example),
the weighting matrices $Q(t,s)$, $R(t,s)$, and $G(t)$ of \eqref{cost} are  allowed to
depend on the initial time $t$, for which the problem is generally {\it time-inconsistent}.
In other words, an optimal control obtained at a given initial pair might not stay optimal
along the optimal trajectory. This feature distinguishes the current problem from the
classical ones.

\ms

To treat the time-inconsistency, we consider the so-called {\it equilibrium} solution,
which was envisioned by Strotz \cite{Strotz-1955} and then developed by Pollak \cite{Pollak}, Ekland and Lazrak \cite{Ekeland2010},
Hu, Jin, and Zhou \cite{Hu-2012,Hu-2017}, Yong \cite{Yong2012,Yong-2014,Yong-2017},
and Bj\"{o}rk, Khapko, and Murgoci \cite{Bjork-Khapko-Murgoci2017},  to mention a few.

\ms

\begin{definition}\rm
We call $\bar\Th(\cd)\in L^\infty(0,T;\,\dbR^{m\times n})$ a {\it closed-loop equilibrium strategy}, if the following holds:
\begin{equation}
\liminf_{\e\to 0^+} {J(t, \bar X(t);u^\e(\cd))-J(t, \bar X(t);\bar\Th(\cd)\bar X(\cd))\over\e}\geq 0,
\end{equation}
for any $x\in\dbR^n$, $t\in[0,T)$, and  $u(\cd)\in\cU[t,T]=L_\dbF^2(t,T;\,\dbR^m)$,
where $\bar X(\cd)$ is the unique solution of the closed-loop system:
\begin{equation}\label{closed-loop-sys}
\left\{
\begin{aligned}
&d\bar X(s)=[A(s)\bar X(s)+B(s)\bar\Th(s)\bar X(s)]ds  \\
&\qq\qq +[C(s)\bar X(s)+D(s)\bar\Th(s)\bar X(s)]dW(s), \q s\in [0,T],\\
&\bar X(0)=x,
\end{aligned}
\right.
\end{equation}
$X^\e(\cd)$ is the unique solution of the perturbed system:
\begin{equation}
\left\{
\begin{aligned}
&dX^\e (s)=[A(s)X^\e(s)+B(s)u(s)]ds  \\
&\qq\qq~ +[C(s)X^\e(s)+D(s)u(s)]dW(s), \q s\in [t,t+\e],\\
&dX^\e (s)=[A(s)X^\e(s)+B(s)\bar\Th(s)X^\e(s)]ds  \\
&\qq\qq~ +[C(s)X^\e(s)+D(s)\bar\Th(s) X^\e(s)]dW(s), \q s\in (t+\e,T],\\
&X^\e(t)=\bar X(t),
\end{aligned}
\right.
\end{equation}
and
\begin{equation}
u^\e(s)\deq\left\{\begin{aligned}
&u(s),&\q s\in[t,t+\e],\\
& \bar\Th(s)X^\e(s), &\q s\in(t+\e,T].\end{aligned}\right.
\end{equation}
\end{definition}

\ms

The intuition behind this definition is  that at any time $t$, the controller is playing a game
against all his incarnations in the future.  An equilibrium strategy is a solution in  closed-loop form
such that any deviation from it at any time instant will be worse off.
We refer the reader to \cite{Strotz-1955,Ekeland2010,Hu-2012,Yong2012,Bjork-Khapko-Murgoci2017,He-Jiang2021}
for more motivations and details of this definition.

\ms

Following Yong \cite{Yong-2017}, the equilibrium Riccati equation (ERE, for short) associated with the closed-loop equilibrium strategy reads
\begin{equation}\label{ERE}
\left\{\begin{aligned}
&\partial_sP(t,s)+ P(t,s)A_\Theta(s)  +A_\Theta(s)^\top  P(t,s)\\
&\q  +C_\Theta(s)^\top  P(t,s)C_\Theta(s) +Q(t,s) +\Th(s)^\top R(t,s)\Th(s)=0, \q 0\leq t\leq s\leq T,\\
&P(t,T)=G(t), \q 0\leq t\leq T,
\end{aligned}\right.
\end{equation}
where
\begin{align}
&\Theta(s)=-[R(s,s)+D(s)^\top P(s,s)D(s)]^{-1}[B(s)^\top P(s,s)+D(s)^\top P(s,s)C(s)],\label{def-Theta}\\
&A_\Theta(s)=A(s)+B(s)\Theta(s) ,\q C_\Theta(s)= C(s)+D(s)\Theta(s),\q s\in[0,T]. \label{A-Theta}
\end{align}
It has been shown (see \cite{Yong-2017,Lu-2023}) that if the ERE \eqref{ERE} admits a bounded positive semi-definite solution
$P(\cdot,\cdot)$, then the function $\Theta(\cd)$ defined by \eqref{def-Theta} is a closed-loop equilibrium strategy.
Thus, the key to obtaining a closed-loop equilibrium strategy of problem \eqref{state}--\eqref{cost}
lies in proving the solvability of \eqref{ERE}.
The ERE \eqref{ERE} has several new features:

\ms
$\bullet$  Note that $P(\cd,\cd)$ depends on two time variables $(t,s)$.
At the point $(t,s)$, the equation  \eqref{ERE} involves the
value $P(s,s)$, and thus is non-local in the first time variable $t$.

\ms
$\bullet$  Alternatively,  we may also view $t$ as a parameter instead of a variable.
That is, we regard \eqref{ERE} as a system of matrix-valued ODEs with parameter $t$ and solution \textcolor[rgb]{1.00,0.00,0.00}{$\{P(\cd,t)\}_{t\in[0,T]}$}.
Then the ERE \eqref{ERE} is an (uncountably) infinite dimensional system of matrix-valued  ODEs,
which is self interacted through the diagonal term $P(s,s)$.

\ms
\noindent
Thus, ERE \eqref{ERE} differs essentially from the classical Riccati equation
derived in time-consistent SLQ optimal control problems (see \cite{Yong-Zhou1999,Sun-Yong-2020}),
in which the unknown $P(t,s)$ is independent of the first variable $t$.

\ms
Under proper conditions, Yong \cite{Yong-2014,Yong-2017} proved the well-posedness of \eqref{ERE} by applying
the celebrated \textit{multi-person differential game method},
whose central idea lies in the control theory (typically in the dynamic programming method).
More precisely, Yong \cite{Yong-2014,Yong-2017}  divided the time interval $[0,T]$ into $N$ subintervals:
$[t_0,t_1),[t_1,t_2),...,[t_{N-1},t_N]$ with $t_0=0,t_N=T$,
and introduced an $N$-person differential game.  In this game,
Player $k$ controls the system on the subinterval $[t_{k-1},t_k)$, and his/her terminal pair becomes the
initial pair for Player $(k+1)$.
Additionally, all players are aware that each player aims to optimize their own cost functional
and will discount the future cost in their own way, regardless of the fact that the later players will control the system.
Using a back induction method, the cost functional  on $[t_{k-1},t_N]$ for each player $k$ can be transformed to a new one over $[t_{k-1},t_k]$,
resulting in each player facing a time-consistent SLQ optimal control problem.
Then by the dynamic programming method, each player will have a unique optimal control for himself/herself,
which also admits a closed-loop representation by introducing the associated Riccati equation over  the corresponding subinterval.
Therefore, an approximate equilibrium strategy and a family of Riccati equations of this game can be obtained,
which will also converge to an equilibrium strategy and a solution of the ERE respectively, as the mesh size tends to zero.

\ms

The multi-person differential game method provides a useful tool to show the solvability of EREs,
and some further development can be found in Dou and L\"{u} \cite{Dou-2020}
and Lazrak, Wang, and Yong \cite{Lazrak-WY2023}.
However, it might be too complex, especially for readers who are mainly interested in the equation itself
but  not very familiar with control theory. Moreover, in some time-inconsistent problems (see \cite{Lu-2024,Wang-Yong-Zhou-2024} on the time-inconsistent problems for FBSDEs, for example), the multi-person differential game method cannot be directly applied,
because the DPP  does not hold even on the subinterval.
It is then natural to ask:
{\it Is there an alternative method, which is independent of the control theory,
to prove the solvability of the ERE \eqref{ERE} directly?
In particular, can we find a Picard iteration sequence to converge to the solution?}
As pointed out by  Yong and Zhou \cite[Chapter 6]{Yong-Zhou1999} and Chen and Zhou \cite{Chen-2000},
the Picard iteration is usually very useful in computing numerical solutions.

\ms

The idea is pretty natural.
But to achieve this goal, namely to find a proper Picard  iteration sequence,
is by no means trivial. The key idea in  \cite{Yong-Zhou1999,Chen-2000}  is that by linearization,
the Riccati equation can be transformed into a Lyapunov differential equation,
and then by the comparison principle of Lyapunov differential equations,
one can construct a decreasing  sequence $\{P_n(\cd)\}$ with a lower bound.
Consequently,  $\{P_n(\cd)\}$ is exactly a Picard iteration sequence.
However, this idea does not work to the current problem.
The main reason lies in that the ERE \eqref{ERE} is non-local. Thus,
one cannot transform it into a Lyapunov differential equation.
Moreover, the comparison principle of matrix-valued non-local equations is not expected in general.

\ms
Based on the above considerations, we introduce a new Picard iteration approach to the solvability of \eqref{ERE}.
We think our  approach is interesting in its own right and is potentially
useful in more applications. The main idea of our proof can be sketched as follows:

\ms
(i) We  first connect  \eqref{ERE} with a Volterra integro-differential equation (i.e., \eqref{lm2.3-eq1})
and show the equivalence between their well-posedness (i.e., Lemmas \ref{lm2.3}--\ref{lm2.4} and Remark \ref{rm2.1}).
Then we establish a priori estimates for  \eqref{lm2.3-eq1} in the case of smooth coefficients  (i.e., Proposition \ref{pro:priori-est}).
We can see the influence of the monotonic condition \ref{ass:H3} very clearly in this part.

\ms
(ii)
Utilizing the priori estimate, we prove the local solvability of \eqref{ERE} with smooth coefficients by
designing a proper Picard  iteration sequence (i.e., Theorem \ref{thm:local-solvability}).
Indeed, the priori  estimate plays an important role in determining the iteration subset of the solution space
(i.e., \eqref{def-C-star} and \eqref{def-D*}).
We highlight again that the Picard  iteration sequence
can provide a numerical algorithm to compute the solution of \eqref{ERE}.

\ms
(iii) Combining the local solvability and  the priori estimate,
the global solvability of \eqref{ERE} with smooth coefficients can be proved (i.e., Theorem \ref{thm:well-posedness}).

\ms
(iv) By modifying the mollification method, the global solvability of \eqref{ERE} can
be extended to the case with non-smooth coefficients (i.e., Proposition \ref{pro:non-smooth}).

\ms

\ms
The rest of the paper is organized as follows.
In section \ref{subsec:1.1}, we provide a literature review on the closely related topics.
Section \ref{sec:pre} collects some preliminary results, which will be used in the paper.
In section \ref{sec-proof-pri}, we establish the key priori estimate. Section \ref{sec:well-posedness} is divided into two parts:
In section \ref{sec:local}, we prove the local solvability, and
the global solvability is established in section \ref{sec:global}.
Section \ref{sec:non-smooth} is devoted to handle the case with non-smooth coefficients.
Some technical proofs are given in Appendix.

\subsection{Literature review on some related topics}\label{subsec:1.1}

The earliest mathematical consideration of time-inconsistent optimal controls was given by Strotz
\cite{Strotz-1955}, followed by Pollak \cite{Pollak}, and the recent works of Basak and  Chabakauri \cite{Basak-2008},
Ekeland and Pirvu \cite{Ekeland2008}, Ekeland and Lazrak \cite{Ekeland2010}, Yong \cite{Yong2012,Yong-2014,Yong-2017},
Wei, Yong, and Yu \cite{Wei-Yong-Yu2017},  Mei and Zhu \cite{Mei-Zhu2020}, Wang and Yong \cite{Wang-Yong2021},
Hamaguchi \cite{Hamaguchi2021}, Nguwi and Privault \cite{Nguwi}, He and Jiang \cite{He-Jiang2021},
Hern\'{a}ndez and Possama\"{\i} \cite{Hernandez-2023,hernandez-2024},
and  Wang, Yong, and Zhou \cite{Wang-Yong-Zhou-2024} for various kinds of problems.
For the time-inconsistent SLQ problems, Hu, Jin, and Zhou \cite{Hu-2012,Hu-2017} considered the equilibrium solution in the open-loop sense;
Yong \cite{Yong-2017} studied the closed-loop equilibrium strategy by introducing the multi-person differential game method;
Dou and L\"{u} \cite{Dou-2020} extended the result of \cite{Yong-2017} to the infinite-dimensional system
described by a stochastic evolution equation;
Lazrak, Wang, and Yong \cite{Lazrak-WY2023} studied a special SLQ zero-sum game problem motivated by some financial applications;
Cai et al. \cite{Cai-2022} and L\"{u} and Ma \cite{Lu-2023,Lu-2024} gave some equivalent characterizations of the existence of
a closed-loop equilibrium strategy in various cases.
In this paper, we shall give a new proof to the solvability of the ERE
associated with closed-loop equilibrium strategies.
Note that the ERE plays the same role as that of HJB equations in general stochastic optimal control problems.
Thus, its solvability can be regarded as the most important issue  in solving time-inconsistent SLQ problems.

\section{Preliminary}\label{sec:pre}

Let $T>0$ be a given time horizon, $\dbS^n$  be the subspace of $\dbR^{n\times n}$ consisting of symmetric matrices,
and $\dbS_+^n$  be the subset of $\dbS^n$ consisting of positive semi-definite matrices.
For any $0\leq t_1\leq t_2\leq T$, denote
$$
\D^*[t_1,t_2]=\{(t,s)\in[t_1,t_2]^2\hbox{ with } t\leq s\}.
$$
We will use $\cC>0$ to represent a generic constant which could be different from line to line.
For any Euclidean space $\dbH$, we introduce the following spaces:
\begin{align*}
& L^2(0,T;\,\dbH)= \Big\{\f:[0,T]\to\dbH\bigm|\f(\cd)~\hb{is measurable, }\int_0^T|\f(s)|^2ds<\infty \Big\};\\
& L^\infty(0,T;\,\dbH)= \Big\{\f:[0,T]\to\dbH\bigm|\f(\cd)~\hb{is measurable, }\esssup_{0\leq s\leq T}|\f(s)|<\infty \Big\};\\
& C([0,T];\,\dbH)= \Big\{\f:[0,T]\to\dbH\bigm|\f(\cd)~\hb{is continuous, }\sup_{0\leq s\leq T}|\f(s)|<\infty \Big\};\\
& C(\D^*[0,T];\,\dbH)= \Big\{\f:\D^*[0,T]\to\dbH\bigm|\f(\cd,\cd)~\hb{is continuous, }\sup_{0\leq t\leq s\leq T}|\f(t,s)|<\infty \Big\};\\
&C^{\infty}([0,T];\,\dbH)= \Big\{\f:[0,T]\to\dbH\bigm|\f(\cd)~\hb{is infinitely differentiable}\Big\};\\
&C_c^{\infty}(\dbR;\,\dbH)= \Big\{\f:\dbR\to\dbH\bigm|\f(\cd)\in C^{\infty}(\dbR;\,\dbH)~\hb{with compact support}\Big\};\\
& L_{\cF_t}^2(\Om;\,\dbH)
=\Big\{\f:\Om\to\dbH\bigm|\f(\cd)~\hb{is $\cF_t$-measurable},\q \dbE\big[|\f(\omega)|^2\big]<\infty \Big\};\\
& L_\dbF^2(\Om;\,C([0,T];\,\dbH))
=\Big\{\f:[0,T]\times\Om\to\dbH\bigm|\f(\cd)~\hb{is $\dbF$-adapted, pathwise continuous, }\\
&\qq\qq\qq\qq\qq\qq\q \dbE\[\ds\sup_{0\leq s\leq T}|\f(s)|^2\]<\infty \Big\};\\
& L_\dbF^2(0,T;\,\dbH)
=\Big\{\f:[0,T]\times\Om\to\dbH\bigm|\f(\cd)~\hb{is progressively measurable, } \\
&\qq\qq\qq\qq\qq\qq\q  \dbE\[\int_0^T|\f(s)|^2ds\]<\infty \Big\}.
\end{align*}

To guarantee the well-posedness of the controlled state equation \eqref{state}, we introduce the following assumptions.

\begin{taggedassumption}{(H1)}\label{ass:H1}
The coefficients and the weighting matrices satisfy:
\begin{align*}
&A(\cd),C(\cd)\in C([0,T];\, \dbR^{n\times n}), \q B(\cd),D(\cd)\in C([0,T];\, \dbR^{n\times m}),\\
&G(\cd)\in C([0,T];\,\dbS^n),\q   Q(\cd,\cd)\in C([0,T]^2;\, \dbS^{n}),\q R(\cd,\cd)\in C([0,T]^2;\, \dbS^{m}).
\end{align*}
\end{taggedassumption}

\ms

Note that under \ref{ass:H1}, for any given $\Th(\cd)\in L^\infty(t,T;\,\dbR^{m\times n})$,
the closed-loop system \eqref{closed-loop-sys} (with $0$ and $\bar\Th(\cd)$ replaced by $t$ and $\Th(\cd)$, respectively)
admits a unique solution $X(\cd)=X(\cd\,;\,t,x,\Th(\cd))\in L_\dbF^2(\Om;\,C([t,T];\,\dbR^n))$
and the outcome $u(\cd)=\Th(\cd)X(\cd)$ belongs to $\cU[t,T]=L_\dbF^2(t,T;\,\dbR^m)$.
Then the cost functional \eqref{cost} is well-defined.

\ms

To find an equilibrium strategy,  we introduce the following assumptions.

\begin{taggedassumption}{(H2)}\label{ass:H2}
There exists a constant $\delta>0$, such that for any $0\leq t\leq s \leq T$,
\begin{equation*}
	Q(t,s)\geq 0,\q  R(t,s)\geq  \delta I,  \q G(t)\geq 0.
\end{equation*}
\end{taggedassumption}
	
\begin{taggedassumption}{(H3)}\label{ass:H3}
For any $0\leq t\leq r \leq s\leq  T$,
\begin{equation*}
Q(t,s)\leq Q(r,s) \leq \widehat{Q}, \q R(t,s)\leq R(r,s)\leq \widehat{R} ,\q G(t)\leq G(r) \leq \widehat{G},
\end{equation*}
where $\widehat{Q}$, $\widehat{R}$, and $\widehat{G}$ are given positive definite matrices.
\end{taggedassumption}

\ms

We now give some useful lemmas.

\begin{lemma}\label{lm2.2}
Suppose that  $\cR(\cd)\in C([0,T];\,\dbS^{n })$ satisfies  the following monotonic condition:
\begin{equation*}
\cR(t)\leq \cR(s),\q 0\leq t\leq s \leq T.
\end{equation*}
Then $\cR(\cd)$ is differentiable almost everywhere and its derivative is positive semi-definite.
\end{lemma}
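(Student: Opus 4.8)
The plan is to reduce the matrix statement to the classical Lebesgue differentiation theorem for monotone scalar functions. First I would fix the standard basis $\{e_1,\dots,e_n\}$ of $\dbR^n$ and note that, by the monotonicity of $\cR(\cd)$, for every fixed $v\in\dbR^n$ the scalar function $s\mapsto\langle\cR(s)v,v\rangle$ is non-decreasing on $[0,T]$, hence of bounded variation. Applying this with $v=e_i$ shows that each diagonal entry $\cR_{ii}(\cd)=\langle\cR(\cd)e_i,e_i\rangle$ is non-decreasing; applying it with $v=e_i+e_j$ together with the polarization identity
\begin{equation*}
\cR_{ij}(s)=\tfrac12\big[\langle\cR(s)(e_i+e_j),e_i+e_j\rangle-\langle\cR(s)e_i,e_i\rangle-\langle\cR(s)e_j,e_j\rangle\big]
\end{equation*}
shows that every off-diagonal entry $\cR_{ij}(\cd)$ is a finite linear combination of non-decreasing functions, hence of bounded variation.

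Next, by Lebesgue's theorem each of the finitely many non-decreasing functions $s\mapsto\langle\cR(s)e_i,e_i\rangle$ and $s\mapsto\langle\cR(s)(e_i+e_j),e_i+e_j\rangle$ with $1\le i<j\le n$ is differentiable outside a Lebesgue-null set; let $\cN\subset[0,T]$ be the union of these finitely many exceptional sets, which is again null. For $t\in[0,T]\setminus\cN$, the polarization identity forces every entry $\cR_{ij}(\cd)$ to be differentiable at $t$, so $\cR(\cd)$ is differentiable at $t$ as a map into $\dbS^n$, with $\cR'(t)_{ij}=(\cR_{ij})'(t)$. This gives differentiability almost everywhere.

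It then remains to check $\cR'(t)\ge 0$ for $t\in[0,T)\setminus\cN$. For arbitrary $v\in\dbR^n$, the scalar function $s\mapsto\langle\cR(s)v,v\rangle=\sum_{i,j}v_iv_j\,\cR_{ij}(s)$ is differentiable at $t$ (a linear combination, with coefficients depending only on $v$, of functions differentiable at $t$), with derivative $\langle\cR'(t)v,v\rangle$; and for $h>0$ small, monotonicity gives $\langle\cR(t+h)v,v\rangle-\langle\cR(t)v,v\rangle\ge 0$, so the right difference quotient at $t$ is nonnegative, whence $\langle\cR'(t)v,v\rangle\ge 0$. Since $v$ is arbitrary, $\cR'(t)\in\dbS_+^n$.

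The argument presents no genuine difficulty; the only point requiring care is that the exceptional null set $\cN$ must not depend on the test vector $v$, which is precisely what the polarization identity achieves — it expresses both every entry $\cR_{ij}(\cd)$ and every quadratic form $\langle\cR(\cd)v,v\rangle$ in terms of the same finite family of monotone scalar functions. (The continuity hypothesis on $\cR(\cd)$ is not actually needed for the conclusion; it only guarantees measurability, which here is automatic.)
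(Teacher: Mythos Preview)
Your proof is correct. Both your argument and the paper's reduce the matrix-valued statement to the classical scalar Lebesgue theorem and then check positive semi-definiteness of the derivative via the quadratic form, but the reduction step differs. The paper observes that $\operatorname{Trace}(\cR(\cdot))$ is non-decreasing and then uses the inequality $|\cR(t)-\cR(s)|\le \cC\,|\operatorname{Trace}(\cR(t))-\operatorname{Trace}(\cR(s))|$ (valid because the difference $\cR(s)-\cR(t)$ is positive semi-definite, so its norm is controlled by its trace) to conclude that every entry of $\cR(\cdot)$ has bounded variation. You instead use the polarization identity to write each entry directly as a finite linear combination of the non-decreasing scalar functions $\langle\cR(\cdot)e_i,e_i\rangle$ and $\langle\cR(\cdot)(e_i+e_j),e_i+e_j\rangle$. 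Your route is a little more explicit about why the exceptional null set can be taken independent of the test vector $v$, and it avoids invoking the norm--trace inequality; the paper's route is more compact. Either way the conclusion follows, and your closing remark that continuity is not essential is also correct.
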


\begin{proof}
From the  inequality in the above, we know that $\hbox{Trace}\big(\cR(\cd)\big)$ is a monotonically increasing function and therefore bounded variation. As
\begin{align*}
	\big| \cR(t)-\cR(s) \big|\leq \cC 	\big| \hbox{Trace}\big(\cR(t)\big)- \hbox{Trace}\big( \cR(s)\big) \big|,
\end{align*}
 we get that components of $\cR(\cd)$  are also bounded variation.  Hence   $\cR(\cd)$   is differentiable almost everywhere.
Suppose $\cR(\cd)$ is differentiable at $t_0$, then for any $x\in \dbR^n$, we have
\begin{align*}
x^{\top}\cR'(t_0) x = x^{\top}\lim_{t \to t_0 } \frac{\cR(t)-\cR(t_0)}{t-t_0} x \geq 0,
\end{align*}
which completes the proof.
\end{proof}

\begin{lemma}\label{lm2.1}
Suppose that
\begin{equation*}
	\cA (\cd), \cC(\cd) \in C ([0,T];\,\dbR^{n\times n}),\q \cQ(\cd)\in C ([0,T];\,\dbS^{n}),\q \cG \in \dbS^{n}.
\end{equation*}
Let
\begin{equation*}
{\cP}(t)=\mE \Big[ \Psi(t,T)^\top \cG\Psi(t,T)+ \int_{t}^{T} \Psi(t,s)^\top  \cQ(s)\Psi(t,s)ds   \Big],  \q 0\leq t \leq T,
\end{equation*}
where
\begin{equation*}
\Psi(t,s)=I+\int_{t}^{s} \cA(r)\Psi(t,r)dr+\int_{t}^{s}\cC(r)\Psi(t,r)dW(r),  \q 0\leq t \leq s\leq  T.
\end{equation*}
Then, we have
\begin{equation*}
\dot{\cP}(t)=-\cP(t)\cA(t)-\cA(t)^\top\cP(t)-\cC(t)^\top\cP(t)\cC(t)-\cQ(t),\q t\in [0,T].
\end{equation*}
\end{lemma}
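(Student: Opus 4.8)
The plan is to verify the Lyapunov-type differential equation by differentiating the explicit formula for $\cP(t)$ directly, using the flow property of $\Psi$ and It\^o's formula. The key observation is that $\Psi(\cd,\cd)$ satisfies a semigroup (cocycle) identity: for $0\leq t\leq \theta\leq s\leq T$, one has $\Psi(t,s)=\Psi(\theta,s)\Psi(t,\theta)$ in an appropriate pathwise sense, because $\Psi(t,\cd)$ solves the linear SDE $d\Psi=\cA\Psi\,dr+\cC\Psi\,dW$ started from the identity at time $t$, and linear SDEs have a multiplicative flow. Actually, since $\Psi(t,s)$ is matrix-valued and we are multiplying on the appropriate side, I would be careful with the order: $\Psi(\theta,s)$ is the fundamental solution transporting from $\theta$ to $s$, so $\Psi(t,s)=\Psi(\theta,s)\Psi(t,\theta)$. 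This lets me factor the terminal and integral contributions and isolate the dependence on the lower limit $t$.

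First I would fix $t$ and a small increment $h>0$ and write
\begin{align*}
\cP(t)=\mE\Big[\Psi(t,T)^\top\cG\Psi(t,T)+\int_t^T\Psi(t,s)^\top\cQ(s)\Psi(t,s)\,ds\Big].
\end{align*}
Using $\Psi(t,s)=\Psi(t+h,s)\Psi(t,t+h)$ for $s\geq t+h$, and the fact that $\Psi(t,t+h)$ is $\cF_{t+h}$-measurable while $\Psi(t+h,s)$ is independent of $\cF_{t+h}$ (both are built from Brownian increments on disjoint intervals), I would condition on $\cF_{t+h}$ and pull out $\Psi(t,t+h)$ to obtain
\begin{align*}
\cP(t)=\mE\big[\Psi(t,t+h)^\top\,\cP(t+h)\,\Psi(t,t+h)\big]+\mE\Big[\int_t^{t+h}\Psi(t,s)^\top\cQ(s)\Psi(t,s)\,ds\Big].
\end{align*}
This is the crucial identity. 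The last term is $h\cQ(t)+o(h)$ by continuity of $\cQ$ and of $\Psi(t,\cd)$ near the diagonal (where $\Psi(t,t)=I$). For the first term, I would apply It\^o's formula to $s\mapsto \Psi(t,s)^\top\cP(s)\Psi(t,s)$ on $[t,t+h]$ — note $\cP(\cd)$ is $C^1$ as a deterministic function, which one sees a posteriori, or one handles it by a limiting/approximation argument — getting
\begin{align*}
\mE\big[\Psi(t,t+h)^\top\cP(t+h)\Psi(t,t+h)\big]=\cP(t)+\int_t^{t+h}\!\!\big[\dot\cP(s)+\cP(s)\cA(s)+\cA(s)^\top\cP(s)+\cC(s)^\top\cP(s)\cC(s)\big]\,ds+o(h).
\end{align*}
Combining the two displays, dividing by $h$, and letting $h\to0$ yields the claimed ODE.

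The main obstacle is the regularity bookkeeping: a priori $\cP(\cd)$ is only continuous, so I cannot immediately apply It\^o to $\Psi(t,s)^\top\cP(s)\Psi(t,s)$ with $\cP$ under the differential. The cleanest fix is to avoid differentiating $\cP(\cd)$ altogether: instead apply It\^o directly to $\Psi(t,s)^\top\cG\Psi(t,s)$ and to $\Psi(t,s)^\top\cQ(s)\Psi(t,s)$ (the latter requires only $\cQ\in C^1$, or one absorbs the $\dot\cQ$ term, or again uses a smooth approximation of $\cQ$ and passes to the limit using uniform estimates on $\Psi$), take expectations to kill the martingale parts, and directly compute $\mE[\Psi(t,T)^\top\cG\Psi(t,T)]$ and $\mE[\int_t^T\cdots]$ as functions of $t$ via the Leibniz rule for the $t$-dependent lower limit. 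Under \ref{ass:H1}-type continuity one then argues the resulting expression is locally Lipschitz in $t$, hence $\cP(\cd)$ is absolutely continuous and the ODE holds a.e.; continuity of the right-hand side then upgrades this to everywhere on $[0,T]$. Alternatively, and perhaps most simply, I would first prove the result assuming $\cA,\cC,\cQ\in C^\infty$, in which case all the manipulations above are rigorous and $\cP\in C^1$, and then remove the smoothness by mollifying $\cA,\cC,\cQ$ and using standard stability estimates for linear SDEs to pass to the limit in the integral form $\cP(t)=\cP(0)-\int_0^t[\cP\cA+\cA^\top\cP+\cC^\top\cP\cC+\cQ]\,ds$.
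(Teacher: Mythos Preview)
The paper does not actually prove Lemma~\ref{lm2.1}; it merely declares the proof ``standard'' and moves on. So there is nothing line-by-line to compare against. That said, the ``standard'' argument the authors almost certainly have in mind (and which they use implicitly elsewhere, e.g.\ in the proof of Lemma~\ref{lm2.3}) goes in the \emph{opposite} direction from yours: one first writes down the linear Lyapunov ODE
\[
\dot{\tilde\cP}(t)+\tilde\cP(t)\cA(t)+\cA(t)^\top\tilde\cP(t)+\cC(t)^\top\tilde\cP(t)\cC(t)+\cQ(t)=0,\qquad \tilde\cP(T)=\cG,
\]
which has a unique $C^1$ solution by standard ODE theory, and then applies It\^o's formula to $s\mapsto\Psi(t,s)^\top\tilde\cP(s)\Psi(t,s)$ on $[t,T]$. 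The drift collapses to $-\Psi^\top\cQ\Psi$, and taking expectations yields $\tilde\cP(t)=\cP(t)$. This completely sidesteps the regularity-of-$\cP$ issue you flag, since $\tilde\cP$ is $C^1$ by construction.

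Your approach---the flow identity $\Psi(t,s)=\Psi(t+h,s)\Psi(t,t+h)$, independence of increments, and the resulting recursion $\cP(t)=\mE[\Psi(t,t+h)^\top\cP(t+h)\Psi(t,t+h)]+\mE\int_t^{t+h}\Psi^\top\cQ\Psi$---is correct and gives a genuinely different, more ``forward'' derivation. The cleanest way to close your argument without circularity is not the It\^o-on-$\Psi^\top\cP(s)\Psi$ step you first write (which, as you note, presupposes $\cP\in C^1$), but rather to treat $\cP(t+h)$ as a \emph{constant} matrix and apply It\^o to $s\mapsto\Psi(t,s)^\top\cP(t+h)\Psi(t,s)$ on $[t,t+h]$; this yields
\[
\mE\big[\Psi(t,t+h)^\top\cP(t+h)\Psi(t,t+h)\big]=\cP(t+h)+\int_t^{t+h}\mE\big[\Psi(t,s)^\top(\cA^\top\cP(t+h)+\cP(t+h)\cA+\cC^\top\cP(t+h)\cC)\Psi(t,s)\big]\,ds,
\]
and then continuity of $\cP$ (which follows easily from the formula and standard SDE moment bounds) plus $\Psi(t,s)\to I$ gives the one-sided derivative directly. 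Your mollification alternative also works. Either way your proof is valid; it is simply longer than the ODE-first route.
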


\ms
The proof of Lemma \ref{lm2.1} is standard.
The following provides a representation for $P(\cd,\cd)$ along the ``diagonal'' points.

\ms

\begin{lemma}\label{lm2.3}
Suppose {\rm \ref{ass:H1}--\ref{ass:H3}} hold and  the ERE \eqref{ERE}
admits a  solution $P(\cd,\cd)\in C(\D^*[0,T];\,\dbS^n_+)$.
Then the diagonal values $\{P(t,t);\,t\in[0,T]\}$ satisfies the following Volterra integro-differential equation:
\begin{equation}\label{lm2.3-eq1}
\left\{\begin{aligned}
&P(t,t)=\dbE \Big[ \Phi(t,T)^\top G(t)\Phi(t,T)+ \int_{t}^{T} \Phi(t,s)^\top \big( Q(t,s)+\Theta(s)^\top R(t,s)\Theta(s)\big)   \Phi(t,s)ds   \Big], \\
&\Phi(t,s)=I+\int_{t}^{s} A_{\Th}(r)\Phi(t,r)dr+\int_{t}^{s}C_{\Theta}(r)\Phi(t,r)dW(r),  \q 0\leq t \leq s\leq  T,
\end{aligned}\right.
\end{equation}
where $A_\Th(\cd)$, $C_\Th(\cd)$, and $\Th(\cd)$ are defined by \eqref{def-Theta}--\eqref{A-Theta}.
\end{lemma}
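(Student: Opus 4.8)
The plan is to fix $t\in[0,T]$ and show that the function $s\mapsto P(t,s)$ on $[t,T]$, evaluated at $s=t$, coincides with the stochastic representation on the right-hand side of \eqref{lm2.3-eq1}. The key observation is that, once we regard $t$ as a \emph{frozen parameter}, the ERE \eqref{ERE} becomes a (classical, local) linear matrix ODE in the single variable $s$: indeed, the coefficients $A_\Th(\cd)$, $C_\Th(\cd)$, $\Th(\cd)$ defined by \eqref{def-Theta}--\eqref{A-Theta} depend only on the diagonal values $P(r,r)$, which are given data once the solution $P(\cd,\cd)$ is assumed to exist. So along the interval $s\in[t,T]$ we have
\begin{equation*}
\partial_s P(t,s)+P(t,s)A_\Th(s)+A_\Th(s)^\top P(t,s)+C_\Th(s)^\top P(t,s)C_\Th(s)+\cQ^t(s)=0,\q P(t,T)=G(t),
\end{equation*}
where $\cQ^t(s):=Q(t,s)+\Th(s)^\top R(t,s)\Th(s)$. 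This is precisely the backward Lyapunov-type ODE whose unique solution is represented by Lemma \ref{lm2.1}, with the identifications $\cA=A_\Th$, $\cC=C_\Th$, $\cQ=\cQ^t$, $\cG=G(t)$, and the fundamental matrix $\Psi(t,s)=\Phi(t,s)$ solving the SDE in \eqref{lm2.3-eq1}.

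First I would check the regularity needed to invoke Lemma \ref{lm2.1}: under \ref{ass:H1}--\ref{ass:H3} the matrix $R(s,s)+D(s)^\top P(s,s)D(s)$ is invertible (it is $\geq\delta I$ since $P\geq 0$ and $R\geq\delta I$), and since $P(\cd,\cd)$ is continuous on $\D^*[0,T]$, the diagonal $s\mapsto P(s,s)$ is continuous; hence $\Th(\cd)\in C([0,T];\dbR^{m\times n})$ and therefore $A_\Th(\cd),C_\Th(\cd)\in C([0,T];\dbR^{n\times n})$ and $\cQ^t(\cd)\in C([0,T];\dbS^n)$ for each fixed $t$. This puts us squarely in the hypotheses of Lemma \ref{lm2.1}. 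Next I would note that $s\mapsto P(t,s)$, being a solution of \eqref{ERE}, is (for fixed $t$) an absolutely continuous function on $[t,T]$ solving the above linear ODE; by the standard uniqueness theorem for linear matrix ODEs it is the \emph{unique} such solution. Since the function $\cP(s):=\dbE[\Phi(s,T)^\top G(t)\Phi(s,T)+\int_s^T\Phi(s,r)^\top\cQ^t(r)\Phi(s,r)dr]$ (for $s\in[t,T]$, with the frozen-$t$ coefficients) solves the same ODE with the same terminal condition by Lemma \ref{lm2.1}, we conclude $P(t,s)=\cP(s)$ on $[t,T]$. Finally, evaluating at $s=t$ gives exactly \eqref{lm2.3-eq1}.

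I do not expect a serious obstacle here; the lemma is essentially a bookkeeping statement that ``freeze $t$, apply the classical Feynman--Kac/Lyapunov representation of Lemma \ref{lm2.1}, then restrict to $s=t$.'' The one place that requires a little care is making the reduction rigorous: one must be explicit that, because the ERE is non-local only through $P(s,s)$ and the map $(t,s)\mapsto P(t,s)$ is already assumed to be a genuine continuous solution, the quantities $\Th(\cd)$, $A_\Th(\cd)$, $C_\Th(\cd)$ entering the $s$-equation are fixed continuous functions independent of the parameter $t$, so that for each $t$ we really do have a bona fide \emph{linear} ODE (not a coupled system) and Lemma \ref{lm2.1} applies verbatim. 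A secondary minor point is to confirm that $\Phi(t,\cd)$ from \eqref{lm2.3-eq1} has the moment bounds (square-integrability of $\sup_{s}|\Phi(t,s)|$ and its square) that legitimise the expectations and the differentiation under the expectation in Lemma \ref{lm2.1}; this follows from boundedness of $A_\Th,C_\Th$ on the compact interval together with standard SDE estimates, and is already implicitly covered by the statement of Lemma \ref{lm2.1}. With these remarks in place the proof is short.
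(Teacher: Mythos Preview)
Your proposal is correct and follows essentially the same route as the paper: freeze $t$, observe that $\Th(\cd),A_\Th(\cd),C_\Th(\cd)$ are bounded continuous because $R(s,s)+D(s)^\top P(s,s)D(s)\geq\delta I$, and then identify $P(t,\cd)$ with the Lyapunov/Feynman--Kac representation via the flow $\Phi(t,\cd)$. The only cosmetic difference is packaging: the paper applies It\^o's formula directly to $s\mapsto \Phi(t,s)^\top P(t,s)\Phi(t,s)$ on $[t,T]$ and takes expectations (obtaining \eqref{lm2.3-eq1} in one line), whereas you invoke Lemma~\ref{lm2.1} together with uniqueness for the linear ODE---which amounts to the same computation.
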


\begin{proof}
Note that for the  positive semi-definite solution $P(\cd,\cd)$, from \ref{ass:H2} we have
$$
R(s,s)+D(s)^\top P(s,s)D(s)\geq R(s,s)\geq \d I,\q s\in[0,T].
$$
Then the corresponding functions $\Th(\cd)$, $A_\Th(\cd)$, and $C_\Th(\cd)$ are bounded.
It follows that  for any given $t\in[0,T)$, the second equation in \eqref{lm2.3-eq1} (which is a linear SDE with bounded coefficients)
admits a unique solution $\Phi(t,\cd)\in L_\dbF^2(\Om;\,C([t,T];\,\dbR^n))$.
Applying  It\^{o}'s formula to the mapping $s\mapsto\Phi(t,s)^\top  P(t,s)\Phi(t,s)$ over $[t,T]$,  we have
\begin{align*}
&{P}(t,t)-\Phi(t,T)^\top G(t)\Phi(t,T)\\
&\q=\int_{t}^{T}\Phi(t,s)^\top \big( Q(t,s)+\Theta(s)^\top R(t,s)\Theta(s)\big)\Phi(t,s)ds \\
&\qq - \int_{t}^{T} \big(\Phi(t,s)^\top {P}(t,s)C_{\Th}(s) \Phi(t,s) + \Phi(t,s)^\top  C_{\Th}(s)^\top {P}(t,s)\Phi(t,s)   \big)dW(s).
\end{align*}
Taking expectation on  both sides of the above, the conclusion is clear.
\end{proof}	

\begin{remark}\rm
Note that \eqref{lm2.3-eq1} is a system of a Volterra integral equation and
an SDE, due to which we call it a  Volterra integro-differential equation.
Moreover, we remark that $P(\cd,\cd)\in C(\D^*[0,T];\,\dbS^n_+)$ means that $P(\cd,\cd)$ is continuous and positive semi-definite.
\end{remark}

Conversely, we also have the following result.

\begin{lemma}\label{lm2.4}
Suppose  {\rm\ref{ass:H1}--\ref{ass:H3}} hold and  the Volterra integro-differential equation \eqref{lm2.3-eq1}
admits a  solution $\{P(t,t);\,t\in[0,T]\}\in C([0,T];\,\dbS^n_+)$.
Then the ERE \eqref{ERE}	admits a  positive semi-definite solution.
\end{lemma}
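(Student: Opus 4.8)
The plan is to prove the converse by an explicit construction: given the diagonal profile $\{P(t,t)\}_{t\in[0,T]}\in C([0,T];\dbS^n_+)$ solving \eqref{lm2.3-eq1}, I would \emph{define} the off-diagonal values $P(t,s)$ for $t<s$ and then verify that this $P(\cd,\cd)$ solves \eqref{ERE}. The key observation is that, once the diagonal is known, the coefficients $\Th(\cd)$, $A_\Th(\cd)$, $C_\Th(\cd)$ given by \eqref{def-Theta}--\eqref{A-Theta} are fully determined (they only depend on $P(s,s)$, $R(s,s)$, etc.), so the fundamental solution matrix $\Phi(t,\cd)$ of the SDE in \eqref{lm2.3-eq1} is also determined. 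Hence I set, for $0\le t\le s\le T$,
\begin{equation*}
P(t,s)\deq\dbE\Big[\Phi(s,T)^\top G(t)\Phi(s,T)+\int_s^T\Phi(s,r)^\top\big(Q(t,r)+\Th(r)^\top R(t,r)\Th(r)\big)\Phi(s,r)\,dr\Big],
\end{equation*}
i.e. the same representation as the diagonal formula but with the \emph{running} time started at $s$ while keeping the \emph{discount} parameter $t$ frozen. When $s=t$ this reduces exactly to \eqref{lm2.3-eq1}, so $P(\cd,\cd)$ extends the given diagonal.

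First I would check regularity: since $\Th,A_\Th,C_\Th$ are bounded (by \ref{ass:H2}, as in the proof of Lemma \ref{lm2.3}), standard SDE estimates give $\Phi(s,\cd)\in L^2_\dbF(\Om;C([s,T];\dbR^n))$ with moment bounds uniform in $s$, and continuous dependence of $\Phi(s,\cd)$ on $s$; combined with continuity of $G,Q,R$ from \ref{ass:H1}, this yields $P(\cd,\cd)\in C(\D^*[0,T];\dbS^n)$, and positive semi-definiteness is immediate from \ref{ass:H2} since the integrand and terminal term are both in $\dbS^n_+$. Next, the heart of the argument: fix $t$ and view $s\mapsto P(t,s)$. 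Applying It\^o's formula to $r\mapsto\Phi(s,r)^\top P(t,r)\Phi(s,r)$ — or more directly differentiating the representation in $s$ using the flow property $\Phi(s,r)=\Phi(s,s')\Phi(s',r)$ type relations together with Lemma \ref{lm2.1} applied to the linear system with coefficients $A_\Th,C_\Th$ and inhomogeneity $\cQ(r)=Q(t,r)+\Th(r)^\top R(t,r)\Th(r)$, terminal $\cG=G(t)$ — gives precisely
\begin{equation*}
\partial_sP(t,s)=-P(t,s)A_\Th(s)-A_\Th(s)^\top P(t,s)-C_\Th(s)^\top P(t,s)C_\Th(s)-Q(t,s)-\Th(s)^\top R(t,s)\Th(s),
\end{equation*}
which is the ODE in \eqref{ERE}, with terminal condition $P(t,T)=G(t)$ read off directly from the definition at $s=T$. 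Finally I must confirm consistency: the $\Th(\cd)$ built from the constructed $P(\cd,\cd)$ along the diagonal coincides with the $\Th(\cd)$ used to define $\Phi$, which holds because $P(s,s)$ agrees with the given diagonal solution of \eqref{lm2.3-eq1} by construction — this is what closes the loop and makes \eqref{ERE} genuinely satisfied rather than just a linear equation with prescribed coefficients.

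The main obstacle I anticipate is the differentiability/justification step: a priori the given diagonal $\{P(t,t)\}$ is only \emph{continuous} in $t$, not differentiable, so I cannot naively differentiate the representation of $P(t,s)$ in the parameter $t$; however, this is not actually needed — the ERE \eqref{ERE} only asks for differentiability in the \emph{second} variable $s$ (for fixed $t$), and that is exactly what Lemma \ref{lm2.1}/It\^o's formula supplies for the linear system, since for frozen $t$ the coefficients $A_\Th(\cd),C_\Th(\cd),Q(t,\cd),R(t,\cd)$ are at least continuous (indeed $\Th,A_\Th,C_\Th$ inherit whatever regularity $P(\cd,\cd)$ has along the diagonal, but continuity suffices to run Lemma \ref{lm2.1}). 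A secondary technical point is verifying the flow/semigroup identity $\Phi(t,r)=\Phi(t,s)\Phi(s,r)$ for $t\le s\le r$ in the matrix-SDE setting, which is standard but should be stated, as it is what lets me relate the diagonal representation started at $t$ to the off-diagonal representation started at $s$ and thereby confirm that the constructed $P$ is consistent with $P(s,s)$ from \eqref{lm2.3-eq1}. With these in hand the proof is essentially a careful bookkeeping of the representation formula plus one application of It\^o's formula.
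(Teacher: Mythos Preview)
Your proposal is correct and follows essentially the same approach as the paper: both freeze the coefficients $\Theta,A_\Theta,C_\Theta$ using the given diagonal, define the off-diagonal $P(t,\cd)$ (the paper via the linear Lyapunov ODE \eqref{pr-lm2.4-eq1}, you via its representation formula, which Lemma~\ref{lm2.1} shows are the same object), and then check that the diagonal of this extension coincides with the given $P(t,t)$, closing the self-consistency loop for $\Theta$. Your concern about the flow identity $\Phi(t,r)=\Phi(t,s)\Phi(s,r)$ is unnecessary here: your representation evaluated at $s=t$ is literally the right-hand side of \eqref{lm2.3-eq1}, so agreement on the diagonal is immediate by hypothesis.
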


\begin{proof}

Let $A_\Th(\cd)$, $C_\Th(\cd)$, and $\Th(\cd)$ be defined by \eqref{def-Theta}--\eqref{A-Theta} and we consider the following equations:
\begin{equation}\label{pr-lm2.4-eq1}
\left\{\begin{aligned}
&\partial_s\wt P(t,s)+ \wt P(t,s) A_\Theta(s)  +A_\Theta(s)^\top \wt P(t,s) +C_\Theta (s)^\top\wt  P(t,s)C_\Theta(s) \\
&\q   +Q(t,s) +\Th(s)^\top R(t,s)\Th(s)=0, \q 0\leq t\leq s\leq T,\\
& P(T,t)=G(t), \q 0\leq t\leq T,
\end{aligned}\right.
\end{equation}
and
\begin{equation}\label{pr-lm2.4-eq2}
\wt \Phi(t,s)=I+\int_{t}^{s} A_{\Th}(r)\wt \Phi(t,r)dr+\int_{t}^{s}C_{\Theta}(r)\wt \Phi(t,r)dW(r),  \q 0\leq t \leq s\leq  T.
\end{equation}
By the standard results of ODEs and SDEs, \eqref{pr-lm2.4-eq1} and \eqref{pr-lm2.4-eq2}
admit unique solution $\wt P(\cd,\cd)$ and $\wt\Phi(\cd,\cd)$, respectively.
Since $\Phi(\cd,\cd)$ and $\wt \Phi(\cd,\cd)$ solve the same linear SDE,
we have $\Phi(\cd,\cd)=\wt \Phi(\cd,\cd)$.
By Applying  It\^{o}'s formula to the mapping $s\mapsto\wt\Phi(t,s)^\top \wt P(t,s) \wt \Phi(t,s)$ over $[t,T]$,  we have
\begin{align}
\wt {P}(t,t)&= \mE \[ \wt \Phi(t,T)^\top G(t) \wt \Phi(t,T)+\int_{t}^{T} \wt \Phi(t,s)^\top
\big(  Q(t,s)+\Theta(s)^\top R(t,s)\Theta(s) \big) \wt \Phi(t,s)ds \]\nonumber\\
& =P(t,t),\q t\in[0,T].\label{pr-lm2.4-eq3'}
\end{align}
Thus, the function $\Theta(\cd)$ in  \eqref{lm2.3-eq1} can be rewritten as
\begin{equation}\label{pr-lm2.4-eq3}
\Theta(t)=-\big[R(t,t)+D(t)^\top\wt P(t,t)  D(t)\big]^{-1}  \big[B(t)^\top  \wt P(t,t)   +D(t)^\top\wt P(t,t)  C(t)\big].
\end{equation}
From \eqref{pr-lm2.4-eq1}--\eqref{pr-lm2.4-eq3}, we can see that $\wt P(\cd,\cd)$ is a solution to the equilibrium Riccati equation \eqref{ERE}.
\end{proof}

\begin{remark}\label{rm2.1}\rm
Combining Lemmas \ref{lm2.3} and \ref{lm2.4} together, we know the following two statements are equivalent:
\begin{itemize}
	\item The ERE \eqref{ERE} admits a unique  solution $P(\cd,\cd)\in C(\D^*[0,T];\,\dbS^n_+)$;
    \item The Volterra integro-differential equation \eqref{lm2.3-eq1} admits a unique  solution
    $\{P(t,t);\,t\in[0,T]\}\in C([0,T];\,\dbS^n_+)$.
\end{itemize}
Moreover, from \eqref{pr-lm2.4-eq3'} and \eqref{pr-lm2.4-eq3}, we know that the ERE \eqref{ERE} and the
Volterra integro-differential equation \eqref{lm2.3-eq1} share the same $\Theta(\cd)$.
\end{remark}

The following provides a mollification method to the coefficients, whose proof  is given in Appendix.

\begin{lemma}\label{lm2.5}
Let $\cG(\cd)\in C([0,T];\,\dbS^n)$  and  $\cQ(\cd,\cd)\in C([0,T]^2;\, \dbS^{n})$  satisfy
\begin{equation*}
 0\leq\cG(t)\leq \cG(s)\leq \widehat{G}\,\text{  and  }\,  \delta I \leq\cQ(t,s)\leq \cQ(r,s)\leq \widehat{Q},\q  0\leq t \leq r \leq s \leq T,
\end{equation*}
where $\d>0$ is a fixed constant. Then there exist a sequence $\{\cG_n(\cd)\}_{n\geq 1}\subset C^{\infty}([0,T];\,\dbS^n)$ satisfying
\begin{itemize}
	\item $0 \leq \cG_n(t)\leq \cG_n(s) \leq \widehat{G},\q  0\leq t \leq s \leq T,$
	\item $   \lim_{n\to \infty}\|\cG_n(\cd)-\cG(\cd)\|_{C([0,T];\,\dbS^n)}=0,$
\end{itemize}
and a sequence $\{\cQ_n(\cd,\cd)\}_{n\geq 1}\subset C([0,T]^2;\,\dbS^{n})$ satisfying
\begin{itemize}
	\item $ \text{for any }   0\leq    s \leq T,\q  \cQ_n(t,s) \text{ is infinitely differentiable with respect to $t$, }$
	\item $ \delta I \leq\cQ_n(t,s)\leq \cQ_n(r,s)\leq \widehat{Q},\q  0\leq t \leq r \leq s \leq T,$
	\item $\lim_{n\to \infty}\int_{0}^{T} \sup_{t\in [0,s]}
	\big|  \cQ_n(t,s)  -  \cQ(t,s) \big|^2
	 ds=0.$
\end{itemize}
\end{lemma}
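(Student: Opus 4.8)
The plan is to construct the two mollified sequences separately, each time using a one-sided mollifier so that the monotonicity in the first time variable is preserved. For $\cG(\cd)$, I would first extend it to the whole line. Define $\bar\cG(t)=\cG(0)$ for $t\le 0$ and $\bar\cG(t)=\cG(T)$ for $t\ge T$; this extension remains continuous and nondecreasing, and still takes values in $[0,\widehat G]$ (in the sense of the PSD order). Pick a standard nonnegative mollifier $\rho\in C_c^\infty(\dbR;\,\dbR)$ with $\supp\rho\subset[0,1]$ and $\int_\dbR\rho=1$, and set
\begin{equation*}
\cG_n(t)=\int_\dbR \rho_n(r)\,\bar\cG(t-r)\,dr,\qquad \rho_n(r)=n\rho(nr).
\end{equation*}
Because $\rho_n\ge 0$ and the PSD cone is closed under nonnegative integration, $\cG_n(t)\ge 0$; for $t\le s$, writing $\cG_n(s)-\cG_n(t)=\int\rho_n(r)\big(\bar\cG(s-r)-\bar\cG(t-r)\big)\,dr$ and using $s-r\ge t-r$ gives $\cG_n(t)\le\cG_n(s)$, and the same averaging argument against the constant bound $\widehat G$ gives $\cG_n(s)\le\widehat G$. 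Smoothness is immediate, and uniform convergence on $[0,T]$ follows from uniform continuity of $\bar\cG$ on a neighborhood of $[0,T]$.

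For $\cQ(\cd,\cd)$, I would mollify only in the first variable, keeping $s$ as a parameter. First extend: for fixed $s$, set $\bar\cQ(t,s)=\cQ(0,s)$ for $t<0$ and $\bar\cQ(t,s)=\cQ(s,s)$ for $t>s$. Then
\begin{equation*}
\cQ_n(t,s)=\int_\dbR \rho_n(r)\,\bar\cQ(t-r,s)\,dr.
\end{equation*}
For each fixed $s$ this is infinitely differentiable in $t$; and since $\delta I\le\bar\cQ(\cdot,s)\le\widehat Q$ with $\bar\cQ(\cdot,s)$ nondecreasing, the same nonnegative-averaging arguments as above yield $\delta I\le\cQ_n(t,s)\le\cQ_n(r,s)\le\widehat Q$ for $0\le t\le r\le s\le T$. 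The joint continuity of $\cQ_n(\cd,\cd)$ on $[0,T]^2$ follows because $\bar\cQ$ is jointly continuous (the gluing at $t=s$ is continuous since $\bar\cQ(s^-,s)=\cQ(s,s)$) and the mollification is against a fixed kernel. It remains to check the $L^2$-in-$s$, uniform-in-$t$ convergence: for $t\in[0,s]$,
\begin{equation*}
\big|\cQ_n(t,s)-\cQ(t,s)\big|\le \int_0^{1/n}\rho_n(r)\,\big|\bar\cQ(t-r,s)-\cQ(t,s)\big|\,dr\le \omega_s(1/n),
\end{equation*}
where $\omega_s$ is a modulus of continuity of $\bar\cQ(\cd,s)$; uniform continuity of $\cQ$ on the compact $[0,T]^2$ lets one take a single modulus $\omega$ independent of $s$, so $\sup_{t\in[0,s]}|\cQ_n(t,s)-\cQ(t,s)|^2\le\omega(1/n)^2\to 0$, and dominated convergence over $s\in[0,T]$ finishes the claim.

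The only genuinely delicate point is the interplay between the mollification and the boundary of the triangle $\{t\le s\}$: one must mollify only toward smaller $t$ (kernel supported on $[0,1]$, so $\cQ_n(t,s)$ uses values $\bar\cQ(t-r,s)$ with $t-r\le t\le s$, never reaching into $t>s$), and one must choose the extension for $t>s$ to equal $\cQ(s,s)$ rather than anything larger, so that the upper bound $\cQ_n(r,s)\le\widehat Q$ and the monotonicity $\cQ_n(t,s)\le\cQ_n(r,s)$ both survive. Everything else — smoothness, PSD bounds, continuity, convergence — is a routine consequence of standard mollifier estimates together with the fact that the PSD cone is closed and convex, hence stable under averaging against a nonnegative kernel. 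I would also remark that $\cQ_n$ need not be jointly smooth, which is why the statement only asks for smoothness in $t$; this is exactly what the later Picard iteration requires.
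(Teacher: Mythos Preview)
Your proof is correct and follows essentially the same mollification strategy as the paper: extend the functions in the $t$-variable so as to preserve monotonicity and the PSD bounds, convolve with a smooth nonnegative kernel, and use convexity of the PSD cone to retain the order relations. The only real difference is cosmetic: the paper uses the standard \emph{symmetric} mollifier $\eta_{1/n}$ (supported on $[-1/n,1/n]$) together with a slightly more elaborate compactly supported extension, whereas you use a one-sided kernel with a simpler constant extension; both work, so your assertion in the last paragraph that one ``must'' mollify only toward smaller $t$ is an overstatement---with your extension $\bar\cQ(t,s)=\cQ(s,s)$ for $t>s$, a symmetric kernel would preserve monotonicity and the upper bound just as well.
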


\section{A priori estimates}\label{sec-proof-pri}

To establish the well-posedness of ERE \eqref{ERE},
we need to establish a priori estimates, which require utilizing the differentiability of the mapping $t\mapsto P(t,t)$.
Thus, we assume that $G(\cd)$, $Q(\cd,\cd)$, and $R(\cd,\cd)$ are differentiable.
More precisely, we impose the following additional assumption for the weighting matrices at the moment.

\begin{taggedassumption}{(H4)}\label{ass:H4}
The weighting matrices satisfy:
\begin{align*}
& G(\cd)\in C^1([0,T];\dbS^n), \hbox{ and } Q(t,s) \hbox{ and } R(t,s)
\text{ are continuously  differentiable}\\
& \hbox{for $t$ over } \{(t,s)\, | \,  0\leq t  \leq s \leq T \}.
\end{align*}
\end{taggedassumption}

\begin{proposition}\label{pro:priori-est}
Let {\rm \ref{ass:H1}}--{\rm\ref{ass:H4}} hold. Suppose that the Volterra integro-differential equation \eqref{lm2.3-eq1}
admits a solution $\{P(t,t);\,t\in[0,T]\}\in C([0,T];\,\dbS^n_+)$.
Then we have
\begin{equation}\label{pro:priori-est1}
\sup_{t\in [0,T]}|P(t,t)| \leq 	\big(  |\widehat{G}|+ T  |\widehat{Q}| \big)
e^{ \int_{0}^{T} ( 2|A(s)| + |C(s)|^2)ds}.
\end{equation}
\end{proposition}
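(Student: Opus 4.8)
By Lemma~\ref{lm2.4}, the given diagonal solution gives rise to a solution $P(\cd,\cd)\in C(\D^*[0,T];\dbS^n_+)$ of the ERE \eqref{ERE} with the same diagonal values $P(t,t)$ and the same $\Theta(\cd)$, so it suffices to bound $\sup_t|P(t,t)|$ for this $P$. The plan is to differentiate the diagonal map $t\mapsto P(t,t)$ and to exploit two structural features — the monotonicity in \ref{ass:H3} and the completion of squares built into \eqref{def-Theta} — which together eliminate all terms involving $B$, $D$, $R$ and $\Theta$, leaving a scalar Gr\"onwall inequality driven only by $A$ and $C$.

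\emph{Step 1: a parametrized representation and regularity of the diagonal.} For $0\le\tau\le s\le T$, applying It\^o's formula to $r\mapsto\Phi(s,r)^\top P(\tau,r)\Phi(s,r)$ on $[s,T]$ — exactly as in the proof of Lemma~\ref{lm2.3}, but with terminal time $s$ in place of $t$ — gives
\begin{equation*}
P(\tau,s)=\dbE\Big[\Phi(s,T)^\top G(\tau)\Phi(s,T)+\int_s^T\Phi(s,r)^\top\big(Q(\tau,r)+\Theta(r)^\top R(\tau,r)\Theta(r)\big)\Phi(s,r)\,dr\Big],
\end{equation*}
where $\Theta$, $A_\Theta$, $C_\Theta$, and hence $\Phi$, do not depend on $\tau$, and $\Theta$ is bounded because $R(s,s)+D(s)^\top P(s,s)D(s)\ge\delta I$ by \ref{ass:H2} and $P\ge0$. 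By \ref{ass:H3} the weights $G(\tau)$, $Q(\tau,r)$, $R(\tau,r)$ are non-decreasing in $\tau$ in the L\"owner order, and $\Phi(s,r)^\top(\cd)\Phi(s,r)\ge0$, so $\tau\mapsto P(\tau,s)$ is non-decreasing; by \ref{ass:H4} these weights are $C^1$ in $\tau$, and since $\dbE|\Phi(s,r)|^2$ is bounded uniformly, differentiating under the expectation shows $\tau\mapsto P(\tau,s)$ is Lipschitz, uniformly in $s$. Combined with the fact that $s\mapsto P(\tau,s)$ is $C^1$ with bounded derivative (it solves \eqref{ERE}, whose right-hand side is continuous and bounded), this yields that $t\mapsto P(t,t)$ is Lipschitz, hence absolutely continuous on $[0,T]$.

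\emph{Step 2: the diagonal differential inequality.} Monotonicity in the first variable gives $P(t+h,t+h)\ge P(t,t+h)$ for $h>0$; writing $P(t,t+h)=P(t,t)+\int_t^{t+h}\partial_sP(t,s)\,ds$ and letting $h\downarrow0$ yields, at a.e.\ $t\in[0,T]$,
\begin{equation*}
\frac{d}{dt}P(t,t)\ \ge\ \partial_sP(t,s)\big|_{s=t}\ =\ -\big[P(t,t)A_\Theta(t)+A_\Theta(t)^\top P(t,t)+C_\Theta(t)^\top P(t,t)C_\Theta(t)+Q(t,t)+\Theta(t)^\top R(t,t)\Theta(t)\big],
\end{equation*}
the last equality being \eqref{ERE} read along the diagonal. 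Now run the standard Riccati algebra: with $\Lambda(t):=R(t,t)+D(t)^\top P(t,t)D(t)\ge\delta I$ and $\Xi(t):=B(t)^\top P(t,t)+D(t)^\top P(t,t)C(t)$, so that $\Theta(t)=-\Lambda(t)^{-1}\Xi(t)$ by \eqref{def-Theta}, expanding $A_\Theta=A+B\Theta$ and $C_\Theta=C+D\Theta$ and completing the square give (suppressing the argument $t$)
\begin{equation*}
PA_\Theta+A_\Theta^\top P+C_\Theta^\top PC_\Theta+\Theta^\top R(t,t)\Theta=PA+A^\top P+C^\top PC-\Xi^\top\Lambda^{-1}\Xi\ \le\ PA+A^\top P+C^\top PC .
\end{equation*}
Substituting back, $-\frac{d}{dt}P(t,t)\le P(t,t)A(t)+A(t)^\top P(t,t)+C(t)^\top P(t,t)C(t)+Q(t,t)$ for a.e.\ $t$.

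\emph{Step 3: Gr\"onwall.} Integrating this matrix inequality from $t$ to $T$ (justified by the absolute continuity from Step~1), using $P(T,T)=G(T)\le\widehat G$ and $Q(t,t)\le\widehat Q$ from \ref{ass:H3}, and then testing against an arbitrary unit vector $x$ together with the elementary bounds $2x^\top PAx\le2|A|\,|P|$ and $x^\top C^\top PCx\le|C|^2|P|$ (here $P(s,s)\ge0$), one obtains
\begin{equation*}
|P(t,t)|\ \le\ |\widehat G|+T|\widehat Q|+\int_t^T\big(2|A(s)|+|C(s)|^2\big)|P(s,s)|\,ds,\qquad t\in[0,T],
\end{equation*}
and Gr\"onwall's inequality gives \eqref{pro:priori-est1}. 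The main obstacle is Step~1: one must ensure that $t\mapsto P(t,t)$ is genuinely absolutely continuous — a decomposition into a monotone part plus a Lipschitz part would not suffice for the fundamental theorem of calculus, which is precisely why the extra differentiability \ref{ass:H4} is assumed — and that the one-sided comparison in Step~2 holds for a.e.\ $t$; once these points are settled, the remaining algebra and the Gr\"onwall estimate are routine.
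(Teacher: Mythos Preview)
Your proof is correct and follows essentially the same approach as the paper: both decompose the derivative of the diagonal $t\mapsto P(t,t)$ into a monotone first-variable part (your $\tau\mapsto P(\tau,s)$ nondecreasing, the paper's $\dbI_1\geq0$) and a second-variable part governed by the ERE/Lyapunov equation (your $\partial_sP(t,s)|_{s=t}$, the paper's $\dot{\cP}(t_0)$), then complete the square to drop the $\Xi^\top\Lambda^{-1}\Xi$ term and apply Gr\"onwall. Your Step~1 is in fact slightly more careful than the paper about justifying absolute continuity of the diagonal before integrating the differential inequality.
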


\begin{proof}
For any fixed  $t,t_0\in [0,T]$, from \eqref{lm2.3-eq1} we have
\begin{equation*}
\frac{P(t,t)-P(t_0,t_0)}{t-t_0}= \dbI_1+ \dbI_2,
\end{equation*}
where
\begin{align*}
\dbI_1&= \frac{1}{t-t_0}\bigg \{\mE \Big[ \Phi(t,T)^\top G(t)\Phi(t,T)+ \int_{t}^{T} \Phi(t,s)^\top \big( Q(t,s)
+\Theta(s)^\top R(t,s)\Theta(s)\big)\Phi(t,s)ds\Big]\\
&\qq\qq-\mE\[\Phi(t,T)^\top G(t_0)\Phi(t,T)+\int_{t}^{T} \Phi(t,s)^\top \big(Q(t_0,s)+\Theta(s)^\top R(t_0,s)\Theta(s)\big)\Phi(t,s)ds\Big] \bigg\},\\
\dbI_2&=\frac{1}{t-t_0}\bigg \{\mE\[ \Phi(t,T)^\top G(t_0)\Phi(t,T)+ \int_{t}^{T} \Phi(t,s)^\top \big( Q(t_0,s)+\Theta(s)^\top R(t_0,s)
\Theta(s)\big)\Phi(t,s)ds  \Big]\\
&\qq\qq -\mE\[\Phi(t_0,T)^\top G(t_0)\Phi(t_0,T)+ \int_{t_0}^{T} \Phi(t_0,s)^\top \big(Q(t_0,s)
+\Theta(s)^\top R(t_0,s)\Theta(s)\big)\Phi(t_0,s)ds \] \bigg\}.	
\end{align*}
Then for $\dbI_1$, by the dominated convergence theorem and Lemma \ref{lm2.2}, we have
\begin{align*}
\lim_{t \to t_0} \dbI_1 &= \mE \Big[\Phi(t_0,T)^\top G'(t_0)\Phi(t_0,T)+ \int_{t_0}^{T} \Phi(t_0,s)^\top\big(\partial_t Q(t_0,s)\\
&\qq +\Theta(s)^\top\partial_t R(t_0,s)\Theta(s)\big)\Phi(t_0,s)ds\Big] \\
&\geq 0,
\end{align*}
for any $ t_0 \in [0,T]$. We next give an estimate for $\dbI_2$.
For any fixed $t_0 \in [0,T]$, we introduce the following auxiliary function over $[0,T]$:
\begin{equation} \label{de-cP}
\cP (t)\deq\mE\[ \Phi(t,T)^\top G(t_0)\Phi(t,T)+ \int_{t}^{T} \Phi(t,s)^\top
\big(Q(t_0,s)+\Theta(s)^\top R(t_0,s)\Theta(s)\big)\Phi(t,s)ds\],
\end{equation}
where $\Phi(\cd,\cd)$ is uniquely determined by \eqref{lm2.3-eq1}. From \eqref{de-cP}, we can see that
\begin{equation}\label{pr-eq1}
\cP(t_0)= P(t_0,t_0)\q\hbox{and}\q \dbI_2=\frac{\cP(t)-\cP(t_0)}{t-t_0}.
\end{equation}
Moreover, by Lemma \ref{lm2.1},  we have
\begin{equation}\label{pr-eq2}
\dot{\cP}(t) =-\cP(t)A_{\Theta}(t)-A_{\Theta}(t)^\top\cP(t)-C_{\Theta}(t)^\top\cP(t)C_{\Theta}(t)
- Q(t_0,t)-\Theta(t)^\top R(t_0,t)\Theta(t),
\end{equation}
with
\begin{equation}\label{pr-eq3}
\Theta(t)=-\big[R(t,t)+D(t)^\top P(t,t)D(t)\big]^{-1}\big[B(t)^\top P(t,t)+D(t)^\top P(t,t)C(t)\big].
\end{equation}
From \eqref{pr-eq1}--\eqref{pr-eq3}, we know that
\begin{align*}
\dot{\cP}(t_0)
&=-\cP(t_0)A_{\Theta}(t_0)-A_{\Theta}(t_0)^\top\cP(t_0)-C_{\Theta}(t_0)^\top\cP(t_0)C_{\Theta}(t_0)\\
&\q-Q(t_0,t_0)-\Theta(t_0)^\top R(t_0,t_0)\Theta(t_0)\\
&= -P(t_0,t_0) A(t_0)-A(t_0)^\top P(t_0,t_0)-C(t_0)^\top P(t_0,t_0)C(t_0)-Q(t_0,t_0)\\
&\q+\big[ P(t_0,t_0) B (t_0)+  C(t_0)^\top P(t_0,t_0)D(t_0)\big]
\big[R(t_0,t_0)+D(t_0)^\top P(t_0,t_0)  D(t_0)\big]^{-1} \\
& \q \times \big[B(t_0)^\top P(t_0,t_0) +D(t_0)^\top P(t_0,t_0)C(t_0)\big],\q  t_0 \in [0,T].
\end{align*}
Thus for $\dbI_2$, we have
\begin{align*}
		\lim_{t \to t_0} \dbI_2 = \lim_{t \to t_0} \frac{\cP(t)-\cP(t_0)}{t-t_0} =\dot{\cP}(t_0), \q t_0 \in [0,T].
\end{align*}
Consequently, for any $ t_0 \in [0,T]$, we have
\begin{align*}
\lim_{t \to t_0} 		\frac{P(t,t)-P(t_0,t_0)}{t-t_0} & =\lim_{t \to t_0} \dbI_1+ 	\lim_{t \to t_0} \dbI_2\\
& \geq -P(t_0,t_0) A(t_0) -A(t_0)^\top  P(t_0,t_0) -C(t_0)^\top P(t_0,t_0)C(t_0)-Q(t_0,t_0).
\end{align*}
Denote
\begin{equation*}
P'(t_0,t_0)\deq\lim_{t \to t_0}\frac{P(t,t)-P(t_0,t_0)}{t-t_0},\q t_0\in[0,T].
\end{equation*}
Then,
\begin{align*}
P(t,t)-G(T) &=\int_{t}^{T} -P'(s,s)ds \\
&\leq\int_{t}^{T}\big[P(s,s) A(s) +A(s)^\top  P(s,s) +C(s)^\top P(s,s)C(s)+Q(s,s)\big]ds,
\end{align*}
which implies
\begin{align*}
|P(t,t)| &\leq |\widehat{G}| e^{\int_{t}^{T} ( 2|A(s)| + |C(s)|^2)ds}
+ \int_{t}^{T} |\widehat{Q}| e^{\int_{t}^{s}( 2|A(r)|+|C(r)|^2)dr}\\
&\leq \big( |\widehat{G}|+T|\widehat{Q}|\big)e^{\int_{t}^{T} ( 2|A(s)|+ |C(s)|^2)ds},
\end{align*}
where $\widehat{G}$ and $\widehat{Q}$ are given by \ref{ass:H3}. The desired result is proved.
\end{proof}
	
\begin{remark}
\rm From Lemma \ref{lm2.3} and Remark \ref{rm2.1}, the priori estimate \eqref{pro:priori-est1}
is also valid for the ERE \eqref{ERE}.
\end{remark}

\section{Well-posedness of ERE \eqref{ERE}}\label{sec:well-posedness}

In this section, we shall prove the local and global solvability of ERE \eqref{ERE} using the priori estimate (i.e., Proposition \ref{pro:priori-est}).

\ms

To proceed, we first make some preliminary preparations.  For any given $\Th(\cd)\in C([0,T];\,\dbR^{m\times n})$,
we can uniquely determine the function $\{P(t,t);t\in[0,T]\}$ by the following equation:
\begin{equation}\label{lm2.3-eq1'}
\begin{cases}\ds{P}(t,t)=\dbE\[ \Phi(t,T)^\top G(t)\Phi(t,T)+ \int_{t}^{T} \Phi(t,s)^\top \big( Q(t,s)+\Theta(s)^\top R(t,s)\Theta(s)\big)   \Phi(t,s)ds \Big],  \\ \ns \ds
\Phi(t,s)=I+\int_{t}^{s} A_{\Th}(r)\Phi(t,r)dr+\int_{t}^{s}C_{\Theta}(r)\Phi(t,r)dW(r),  \q 0  \leq t \leq s\leq  T,
\end{cases}
\end{equation}
with $A_\Th(\cd)$ and $C_\Th(\cd)$ defined by \eqref{A-Theta} for the given $\Th(\cd)$. Further, we have the following property.

\begin{lemma}\label{lem:ERE-Volterra}
Suppose  {\rm\ref{ass:H1}--\ref{ass:H3}} hold. Then for any given $\Th(\cd)\in C([0,T];\,\dbR^{m\times n})$,
the function $\{P(t,t);t\in[0,T]\}$ defined by \eqref{lm2.3-eq1'} is positive semi-definite and continuous.
\end{lemma}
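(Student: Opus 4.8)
The plan is to verify the two properties separately, by elementary arguments. The key preliminary observation is that the fixed matrix $\Th(\cd)$ is continuous on $[0,T]$, hence bounded, so that $A_\Th(\cd)$ and $C_\Th(\cd)$ from \eqref{A-Theta} are bounded and continuous; consequently, for each $t\in[0,T)$ the linear SDE in \eqref{lm2.3-eq1'} has a unique solution $\Phi(t,\cd)\in L^2_\dbF(\Om;C([t,T];\dbR^n))$, and the first line of \eqref{lm2.3-eq1'} indeed defines an element of $\dbS^n$.

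For positive semi-definiteness, I would fix $t\in[0,T]$ and $x\in\dbR^n$. Under \ref{ass:H2} we have $G(t)\ge0$, $Q(t,s)\ge0$, and $R(t,s)\ge\delta I\ge0$ for $s\in[t,T]$, hence $\Th(s)^\top R(t,s)\Th(s)\ge0$, and therefore
\[
x^\top P(t,t)x=\mE\Big[\big|G(t)^{1/2}\Phi(t,T)x\big|^2+\int_t^T\big(\big|Q(t,s)^{1/2}\Phi(t,s)x\big|^2+\big|R(t,s)^{1/2}\Th(s)\Phi(t,s)x\big|^2\big)\,ds\Big]\ge0.
\]

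For continuity, I would fix $t_0\in[0,T]$ and let $t\to t_0$; by symmetry it is enough to treat $t\uparrow t_0$, comparing the two processes on the common interval $[t_0,T]$ (the case $t\downarrow t_0$ is identical after exchanging the roles of $t$ and $t_0$). Standard moment estimates for the SDE give $\sup_{t}\mE\big[\sup_{s\in[t,T]}|\Phi(t,s)|^2\big]<\infty$ and $\mE|\Phi(t,t_0)-I|^2\le\cC(t_0-t)$; writing, for $s\in[t_0,T]$,
\[
\Phi(t,s)-\Phi(t_0,s)=\big(\Phi(t,t_0)-I\big)+\int_{t_0}^s A_\Th(r)\big(\Phi(t,r)-\Phi(t_0,r)\big)\,dr+\int_{t_0}^s C_\Th(r)\big(\Phi(t,r)-\Phi(t_0,r)\big)\,dW(r),
\]
the Burkholder--Davis--Gundy inequality and Gronwall's lemma yield $\mE\big[\sup_{s\in[t_0,T]}|\Phi(t,s)-\Phi(t_0,s)|^2\big]\le\cC(t_0-t)\to0$. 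Then I would decompose $P(t,t)-P(t_0,t_0)$ into three pieces: (a) the terminal term, estimated via $|G(t)-G(t_0)|\to0$ (by \ref{ass:H1}), the uniform second-moment bound on $\Phi$, and the $L^2$-convergence just established; (b) the integral over $[t_0,T]$, estimated via the uniform continuity of $Q(\cd,\cd)$ and $R(\cd,\cd)$ on $[0,T]^2$ (by \ref{ass:H1}) together with the same bounds on $\Phi$; and (c) the leftover integral over $[t,t_0]$, whose expectation is $O(t_0-t)$ since its integrand is dominated by $\cC|\Phi(t,s)|^2$, uniformly in $t$. Each of the three pieces tends to $0$, which proves that $t\mapsto P(t,t)$ is continuous.

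The argument is routine; the only place requiring a little care will be the bookkeeping forced by the fact that $\Phi(t,\cd)$ lives on the moving interval $[t,T]$ — one must compare the two processes only on the common subinterval and absorb the mismatch in the integration limits into the $O(t_0-t)$ term of item (c). Notably, no comparison principle and no control-theoretic input enters.
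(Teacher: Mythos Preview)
Your proposal is correct and follows essentially the same route as the paper: positive semi-definiteness from \ref{ass:H2}, a uniform second-moment bound for $\Phi$, the Gronwall estimate $\sup_{s}\mE|\Phi(t,s)-\Phi(t_0,s)|^2\le\cC|t-t_0|$, and then a term-by-term decomposition of $P(t,t)-P(t_0,t_0)$ into terminal, common-interval, and leftover pieces. The only cosmetic difference is that the paper packages the result as a single quantitative inequality of the form $|P(t,t)-P(r,r)|\le\cC(\Th)\big(|t-r|^{1/2}+|G(t)-G(r)|\big)+\cC(\Th)\int_r^T(|Q(t,s)-Q(r,s)|+|R(t,s)-R(r,s)|)\,ds$, which it reuses later in Theorem~\ref{thm:local-solvability}.
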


\begin{proof}
From \ref{ass:H2}, one can easily show that $P(\cd,\cd)\geq 0$.
By the standard estimates of SDEs, we have
\begin{equation}\label{lem:ERE-Proof1}
\sup_{t\in[0,T]}\mE\[\sup_{s\in[t,T]}|\Phi(t,s)|^2\]
\leq \cC(\Th),
\end{equation}
where $\cC(\Th)$ is a general constant depending on $\|\Th(\cd)\|_{C([0,T];\,\dbR^{m\times n})}$
and the coefficients.
For $0\leq t\leq r \leq s \leq T$, we have
\begin{align*}
\mE \big[ | \Phi(t,s)-\Phi(r,s)|^2 \big]
\leq &\,  2  \, \mE \[ \big| \Phi(t,r)-\Phi(r,r)\big|^2
+\Big|\int_r^{s} A_{\Theta}(\t)\big( \Phi(t,\t)-\Phi(r,\t) \big)d\t \Big|^2\\
&\q+\Big|\int_r^{s} C_{\Theta}(\t)\big( \Phi(t,\t)-\Phi(r,\tau) \big)dW(\t) \Big|^2\]\\
\leq&2\mE\[2\Big|\int_{t}^rA_{\Theta}(\t)\Phi(t,\t)d\t\Big|^2
+2\Big|\int_{t}^r  C_{\Theta}(\t)\Phi(t,\t)dW(\t) \Big|^2 \\
&\qq +\Big|\int_r^{s} A_{\Theta}(\t)\big( \Phi(t,\t)-\Phi(r,\t) \big)d\t \Big|^2 \\
&\qq +\Big|\int_r^{s} C_{\Theta}(\t)\big( \Phi(t,\t)-\Phi(r,\tau) \big)dW(\t) \Big|^2\]\\
\leq& \cC(\Th) (r-t)+ \cC(\Th) \int_r^{s} \mE\big[| \Phi(t,\t)-\Phi(r,\t)|^2 \big]d\t,
\end{align*}
which implies
\begin{equation}\label{lem:ERE-Proof2}
\sup_{r \leq s \leq T}\mE \big[ | \Phi(t,s)-\Phi(r,s) |^2\big]\leq \cC(\Th)|t-r|.
\end{equation}
Then for any $0\leq t \leq r \leq T$,
\begin{align*}
|P(t,t)-P(r,r)|&\leq \cC(\Th) \mE \big[ | \Phi(t,T)-\Phi(r,T) |^2\big]^{1\over 2}
\sup_{s\in[0,T]}\mE\big[|\Phi(s,T)|^2\big]^{1\over 2}\\
&\q+\cC(\Th)|G(t)-G(r)|\sup_{s\in[0,T]}\mE\big[|\Phi(s,T)|^2\big] +\cC(\Th)\int_t^r \dbE\big[| \Phi(t,s) |^2\big]ds\\
&\q+\cC(\Th)\int_r^T \dbE\big[| \Phi(t,s)-\Phi(r,s) |^2\big]^{1\over 2}ds\sup_{t\in[0,T]}\mE\[\sup_{s\in[t,T]}|\Phi(t,s)|^2\]^{1\over 2}\\
&\q+\cC(\Th)\int_r^T \big[|Q(t,s)-Q(r,s)|+|R(t,s)-R(r,s) |\big]ds\\
&\q\times\sup_{t\in[0,T]}\mE\[\sup_{s\in[t,T]}|\Phi(t,s)|^2\].
\end{align*}
Substituting \eqref{lem:ERE-Proof1}--\eqref{lem:ERE-Proof2} into the above, we get
\begin{align}
|P(t,t)-P(r,r)|&\leq\cC(\Th)\big(|t-r|^{1\over 2}+|G(t)-G(r)|\big)\nonumber\\
&\q+\cC(\Th)\int_r^T \big[|Q(t,s)-Q(r,s)|+|R(t,s)-R(r,s) |\big]ds.\label{lem:ERE-Proof3}
\end{align}
This completes the proof.
\end{proof}

\begin{remark}\label{rm4.1}\rm
Combining  Lemma \ref{lem:ERE-Volterra} with  {\rm \ref{ass:H1}--\ref{ass:H2}}  together, we can see  that
$$
-[R(s,s)+D(s)^\top P(s,s)D(s)]^{-1}[B(s)^\top P(s,s)+D(s)^\top P(s,s)C(s)],\q s\in [0,T]
$$
is continuous.
One can easily show that the result above still holds for any $\Theta(\cd)\in L^{\infty}(0,T;\,\dbR^{m\times n})$.
\end{remark}

Next, we make the following notations,
\begin{align}
\cC^*&\deq \frac{1}{\delta} \left( \|B(\cd)\|_{C([0,T];\,\dbR^{n \times m})}
+ \|D(\cd)\|_{C([0,T];\,\dbR^{n \times m})}\|C(\cd)\|_{C([0,T];\,\dbR^{n \times n})}  \right) \nonumber\\
&\qq \times\big(|\widehat{G}|+T|\widehat{Q}|\big)	e^{ \int_{0}^{T} ( 2|A(s)| + |C(s)|^2)ds},\label{def-C-star}
\end{align}
where $\widehat{G}$ and $\widehat{Q}$ have been given in \ref{ass:H3}.
Indeed, the uniform constant $\cC^*$ is motivated from the priori estimate  \eqref{pro:priori-est1}. More specifically,
if the Volterra integro-differential equation \eqref{lm2.3-eq1} admits a solution $\{P(t,t);\,t\in[0,T]\}\in C([0,T];\,\dbS^n_+)$,
then the function $\Theta(\cd)$ defined by \eqref{def-Theta} satisfies the following priori estimate:
\begin{equation}\label{pri-the}
	\|\Theta(\cd)\|_{C([0,T];\,\dbR^{m\times n})} \leq \cC^*.
\end{equation}

\ms

Now we are ready to state the main results in this section.

\subsection{Local solvability}\label{sec:local}

We obtain the local solvability of ERE \eqref{ERE} by constructing a proper Picard  iteration sequence,
and utilizing the priori estimate (i.e., Proposition \ref{pro:priori-est}).

\begin{theorem}\label{thm:local-solvability}
Suppose that {\rm \ref{ass:H1}--\ref{ass:H4}} hold. Then there exists a $\k\in(0,T]$ such that
the ERE \eqref{ERE} admits a unique solution $P(\cd,\cd)\in C(\D^*[T-\k,T];\,\dbS^n_+)$.
Moreover, the function $\Theta(\cd)$ defined by \eqref{def-Theta} belongs to $C([T-\k,T];\,\dbR^{m\times n})$.
\end{theorem}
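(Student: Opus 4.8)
The plan is to reduce \eqref{ERE} to the Volterra integro-differential equation \eqref{lm2.3-eq1} (by Lemmas \ref{lm2.3}--\ref{lm2.4} and Remark \ref{rm2.1}, all of which localize verbatim to any $[T-\k,T]$ in place of $[0,T]$) and then to solve the latter on a short interval by the Banach fixed-point theorem, iterating on the feedback gain $\Th(\cd)$. Fix $\k\in(0,T]$ to be chosen, work in the Banach space $\cX_\k:=C([T-\k,T];\dbR^{m\times n})$ with sup-norm $\|\cd\|_C$, put $\Lambda:=\frac1\d\big(\|B(\cd)\|_C+\|D(\cd)\|_C\|C(\cd)\|_C\big)$ so that $\cC^*=\Lambda\big(|\widehat{G}|+T|\widehat{Q}|\big)e^{\int_0^T(2|A(s)|+|C(s)|^2)ds}$ by \eqref{def-C-star}, and on the closed ball $\cD^*_\k:=\{\Th\in\cX_\k:\|\Th\|_C\le\cC^*\}$ define $\sT:\cD^*_\k\to\cX_\k$ as follows: given $\Th\in\cD^*_\k$, solve the linear SDE in \eqref{lm2.3-eq1'} for $\Phi_\Th$, let $\{P^\Th(t,t)\}$ be the function produced by \eqref{lm2.3-eq1'}, and set
\[
(\sT\Th)(t):=-\big[R(t,t)+D(t)^\top P^\Th(t,t)D(t)\big]^{-1}\big[B(t)^\top P^\Th(t,t)+D(t)^\top P^\Th(t,t)C(t)\big].
\]
By Lemma \ref{lem:ERE-Volterra} and \ref{ass:H2}, $P^\Th(\cd,\cd)\ge0$ is continuous and $R+D^\top P^\Th D\ge\d I$, so $\sT\Th\in\cX_\k$; a fixed point of $\sT$ is exactly a solution of \eqref{lm2.3-eq1} on $[T-\k,T]$, and conversely, by Lemma \ref{lm2.3}, the priori estimate \eqref{pro:priori-est1} localized to $[T-\k,T]$, and \eqref{pri-the}, any solution of \eqref{lm2.3-eq1} (hence of \eqref{ERE}) has its $\Th(\cd)$ inside $\cD^*_\k$ and is a fixed point of $\sT$.

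\textbf{Self-mapping.} First I would check $\sT(\cD^*_\k)\subseteq\cD^*_\k$ for $\k$ small. For $\Th\in\cD^*_\k$ the coefficients $A_\Th=A+B\Th$ and $C_\Th=C+D\Th$ are bounded by a constant depending only on the data and $\cC^*$, so standard SDE estimates give $\sup_t\dbE[\sup_{s\in[t,T]}|\Phi_\Th(t,s)|^2]\le\cC$ uniformly in $\k\le T$, and moreover $P^\Th(t,t)\to G(t)$ uniformly in $t$ and in $\Th\in\cD^*_\k$ as $\k\downarrow0$ (the integral term in \eqref{lm2.3-eq1'} is $O(\k)$ and $\Phi_\Th(t,T)\to I$ in $L^2$). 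Hence $\sup_t|P^\Th(t,t)|\le|\widehat{G}|+o(1)$ as $\k\downarrow0$, so
\[
\|\sT\Th\|_C\le\Lambda\sup_t|P^\Th(t,t)|\le\Lambda\big(|\widehat{G}|+o(1)\big)<\cC^*,
\]
the strict inequality holding with room to spare because $\cC^*=\Lambda(|\widehat G|+T|\widehat Q|)e^{\int_0^T(\cdots)}\ge\Lambda(|\widehat G|+T|\widehat Q|)>\Lambda|\widehat G|$. This is exactly where the priori estimate dictates the radius $\cC^*$ of the iteration set. So there is $\k_1>0$ with $\sT(\cD^*_\k)\subseteq\cD^*_\k$ for all $\k\le\k_1$.

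\textbf{Contraction and conclusion.} Next I would show $\sT$ is a contraction on $\cD^*_\k$ for $\k$ small. For $\Th,\bar\Th\in\cD^*_\k$, writing $A_\Th\Phi_\Th-A_{\bar\Th}\Phi_{\bar\Th}=A_\Th(\Phi_\Th-\Phi_{\bar\Th})+B(\Th-\bar\Th)\Phi_{\bar\Th}$ (and similarly for $C$), the usual SDE difference estimate with Gronwall over subintervals of $[T-\k,T]$ gives $\sup_t\dbE[\sup_{s\in[t,T]}|\Phi_\Th(t,s)-\Phi_{\bar\Th}(t,s)|^2]\le\cC\k\|\Th-\bar\Th\|_C^2$; inserting this into the representation of $P^\Th,P^{\bar\Th}$, splitting each quadratic difference as in the proof of Lemma \ref{lem:ERE-Volterra}, and using Cauchy--Schwarz with the uniform second-moment bound on the $\Phi$'s, yields $\|P^\Th-P^{\bar\Th}\|_C\le\cC\k^{1/2}\|\Th-\bar\Th\|_C$. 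Since on the bounded set $\{Z\in\dbS^n:|Z|\le M\}$ (with $M$ the bound of the previous step, uniform for $\k\le T$) the feedback map $Z\mapsto-[R+D^\top ZD]^{-1}[B^\top Z+D^\top ZC]$ is Lipschitz with a constant $L$ depending only on $\d$ and the data (using $R+D^\top ZD\ge\d I$), it follows that $\|\sT\Th-\sT\bar\Th\|_C\le L\cC\k^{1/2}\|\Th-\bar\Th\|_C$; choosing $\k\in(0,\k_1]$ with $L\cC\k^{1/2}\le\frac12$ makes $\sT$ a contraction of the complete metric space $\cD^*_\k$. Its unique fixed point $\Th^*$, together with $P(t,t):=P^{\Th^*}(t,t)$, solves \eqref{lm2.3-eq1} on $[T-\k,T]$ with $\{P(t,t)\}\in C([T-\k,T];\dbS^n_+)$, so Lemma \ref{lm2.4} and Remark \ref{rm2.1} yield a positive semi-definite solution $P(\cd,\cd)\in C(\D^*[T-\k,T];\dbS^n_+)$ of \eqref{ERE} with $\Theta(\cd)=\Th^*\in C([T-\k,T];\dbR^{m\times n})$. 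For uniqueness: any solution of \eqref{ERE} in $C(\D^*[T-\k,T];\dbS^n_+)$ has, by Lemma \ref{lm2.3} and the priori estimate, its $\Theta(\cd)$ in $\cD^*_\k$, hence equal to $\Th^*$, after which \eqref{ERE} is a linear matrix ODE in $s$ with a unique solution.

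\textbf{Main obstacle.} The hard part is not any single estimate but coupling the two requirements on one and the same set $\cD^*_\k$ for a single $\k$: the self-mapping property forces the radius to be the priori-estimate constant $\cC^*$ and survives the limit $\k\downarrow0$ only because of the strict inequality $\Lambda|\widehat G|<\cC^*$, while the contraction property needs enough powers of $\k$ extracted from the SDE difference bounds to beat the data- and $\cC^*$-dependent Lipschitz constant $L$ of the feedback map $Z\mapsto\Theta$. Arranging both at once is the real content; note that the monotonicity hypothesis \ref{ass:H3} has already been consumed inside the priori estimate (through the sign of $\dbI_1$ in the proof of Proposition \ref{pro:priori-est}), and plays no further explicit role here beyond fixing $\cC^*$.
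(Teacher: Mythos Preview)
Your proof is correct and follows the same overall strategy as the paper---reduce to the Volterra integro-differential equation \eqref{lm2.3-eq1}, iterate on the feedback gain $\Theta(\cdot)$ via a Banach fixed point argument, and use the priori estimate both to pin down the iteration set and to conclude uniqueness. There are two technical differences worth noting. First, your iteration set is the ball $\cD^*_\k=\{\Theta:\|\Theta\|_C\le\cC^*\}$ centered at the origin, and your self-mapping rests on the strict inequality $\Lambda|\widehat G|<\cC^*$ together with $P^\Theta(t,t)\to G(t)$ as $\k\downarrow0$; the paper instead takes a ball $\Delta_0$ of radius $2\cC^*$ centered at the terminal feedback value $-[R(T,T)+D(T)^\top G(T)D(T)]^{-1}[B(T)^\top G(T)+D(T)^\top G(T)C(T)]$ and proves self-mapping by an explicit continuity estimate of $\Gamma[\Theta](t)$ relative to that center (equation \eqref{pr-th2-eq0}). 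Second, you take a single $\k$ small enough for both invariance and contraction; the paper separates the two scales---a $\k$ for invariance and a possibly smaller $h$ for contraction---and then runs an induction (Step~4) to patch the contraction from $[T-h,T]$ out to $[T-\k,T]$. Your route is shorter and perfectly adequate for the local statement. The paper's more elaborate bookkeeping (uniform invariance scale $\k$ depending only on the moduli of continuity of the data and on $\cC^*$, plus the piecewise induction) is not gratuitous: it is exactly the machinery recycled in the proof of Theorem~\ref{thm:well-posedness} to step from $[T-\k,T]$ to $[T-2\k,T]$ with the \emph{same} $\k$, which is what ultimately yields global solvability.
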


\begin{proof}
By Lemma \ref{lem:ERE-Volterra} and Remark \ref{rm4.1}, we can define the mapping $\Gamma:  C([T-\k,T];\,\dbR^{m\times n}) \to   C([T-\k,T];\,\dbR^{m \times n}) $ as follows
(in which $\k\in (0,T]$ is to be determined later):
For any $\Theta(\cd) \in  C([T-\k,T];\,\dbR^{m \times n})$,
\begin{align}
\G[\Theta(\cd)](t)=-\big[R(t,t)+D(t)^\top P(t,t)  D(t)\big]^{-1}  \big[B(t)^\top  P(t,t)   +D(t)^\top P(t,t)  C(t)\big],\nonumber\\
 t\in [T-\k,T], \label{def-Gamma}
\end{align}
where $P(\cd,\cd)\geq 0$ is given by \eqref{lm2.3-eq1'} with the time interval replaced by $[T-\k,T]$.
Denote
\begin{align}
\Delta_0&\deq \big \{ \Theta\in \dbR^{m\times n}, \hbox{ with } \big | \Theta-[R(T,T)+D(T)^\top G(T)  D(T)]^{-1}\nonumber \\
&\qq\qq\qq\qq\qq\times [B(T)^\top  G(T)   +D(T)^\top G(T)  C(T)] \big  |\leq 2\cC^* \big \}.\label{def-D*}
\end{align}
We claim that  there exists a $\k\in(0,T]$  to  guarantee the following Property (P):
For any  $\Theta(\cd) \in C([T-\k,T];\,\dbR^{m \times n})$
satisfying $\Theta(t) \in \Delta_0$ for $t\in[T-\k,T]$, we have
\begin{equation}\label{pr-th2-eq2}
\G[\Theta (\cd)](t) \in \Delta_0,\q  t \in [T-\k,T].
\end{equation}

\ms

{\bf Step 1.} We shall prove the claim in this step.
For any  $\Theta(\cd) \in C([T-\k,T];\,\dbR^{m\times n})$ satisfying $\Theta(t) \in \Delta_0$  for $ t \in [T-\k,T]$,
noting
\begin{equation}\label{pr-th2-eq3'}
	\big | [R(T,T)+D(T)^\top G(T)  D(T)]^{-1}[B(T)^\top  G(T)   +D(T)^\top G(T)  C(T)] \big  |\leq \cC^*,
\end{equation}
we have
\begin{equation}\label{pr-th2-eq3}
\|\Theta(\cd)\|_{C([T-\k,T];\,\dbR^{m\times n}) }\leq 3 \cC^*.
\end{equation}
Taking this $\Theta(\cd)$ into \eqref{lm2.3-eq1'}, for $t\in [T-\k,T]$, we have
\begin{align}
&\big |\G[\Theta (\cd)](t)-  [R(T,T)+D(T)^\top G(T)  D(T)]^{-1}[B(T)^\top  G(T)
+D(T)^\top G(T)  C(T) ] \big |\nonumber\\
&\q \leq \frac{1}{\delta ^2} \big( |R(t,t)- R(T,T)| +2 | D(t)-D(T)| |P(t,t)| |D(t)|+|D(t)|^2 | P(t,t)-G(T) |\big)\nonumber
\\
& \qq \times \big( |B(t)|+  |D(t)| |C(t)| \big) |P(t,t)| + \frac{1}{\delta}  \big(| B(t)-B(T) | |P(t,t)|+ |B(T)| | P(t,t)-G(T) |\nonumber \\
&\qq  + |D(t)-D(T)| |P(t,t)| |C(t)|  + |D(T)|  |C(t)|   | P(t,t)-G(T) |\nonumber\\
& \qq + |D(T)|  |G(T)|  |C(t)-C(T)|    \big).\label{pr-th2-eq0}
\end{align}
Therefore, to ensure \eqref{pr-th2-eq2}, we only need to estimate
$\|P(\cd,\cd)\|_{C([T-\k,T];\,\dbR^{n \times n})}$ and $|P(t,t)-G(T)|$.
We first handle $\|P(\cd,\cd)\|_{C([T-\k,T];\,\dbR^{n \times n})}$.
From \eqref{lm2.3-eq1'} and \eqref{pr-th2-eq3}, we can get
\begin{equation}\label{pr-th2-eq4}
\sup_{\k\in (0,T]}	\sup_{T-\k \leq t\leq s \leq T}\dbE\big[| \Phi(t,s)|^2\big]\leq \cC(\cC^*),
\end{equation}
and then
\begin{equation}\label{pr-th2-eq7}
\sup_{\k\in (0,T]} \|P(\cd,\cd)\|_{C([T-\k,T];\,\dbR^{n \times n})} \leq \cC(\cC^*),
\end{equation}
where $ \cC(\cC^*)$ is a general constant depending on $\cC^*$ and system parameters.
Next we handle $|P(t,t)-G(T)|$. To this end, we shall estimate $\dbE\big[|\Phi(t,s)-\Phi(r,s) \big|^2\big]$ for preparation.
For $T-\k\leq t\leq r \leq s \leq T$, similar to \eqref{lem:ERE-Proof2}, we have
\begin{equation}\label{pr-th2-eq5}
\sup_{ r \leq s \leq T}\mE \big[ | \Phi(t,s)-\Phi(r,s) |^2\big]\leq \cC(\cC^*) (r-t),
\end{equation}
where $\cC(\cC^*)$ is independent of $\k$.
Then by the same argument as that employed in the proof of \eqref{lem:ERE-Proof3}, we can prove
\begin{align}
|P(t,t)-P(r,r)| &\leq\cC(\cC^*)\big(|t-r|^{1\over 2}+|G(t)-G(r)|\big)\nonumber\\
&\q+\cC(\cC^*)\int_r^T \big(|Q(t,s)-Q(r,s)|+|R(t,s)-R(r,s) |\big)ds.\label{pr-th2-eq17}
\end{align}
In particular,
\begin{equation}\label{pr-th2-eq17'}
|P(t,t)-G(T)| \leq \cC(\cC^*)\big(|t-T|^{1\over 2}+|G(t)-G(T)|\big).
\end{equation}
From  \eqref{pr-th2-eq0}, \eqref{pr-th2-eq7}, \eqref{pr-th2-eq17'}, and  the uniform continuity
of $B(\cd)$, $C(\cd)$, $D(\cd)$, $G(\cd)$, and $R(\cd,\cd)$, we know that there exists a $\k\in (0,T]$,
depending only on system parameters, such that \eqref{pr-th2-eq2} holds.

\ms
{\bf Step 2.} Fix a $\k\in(0,T]$ with Property (P). Our goal is to prove the well-posedness of \eqref{lm2.3-eq1} on $[T-\k,T]$.
For preparation, we define $\G_{[T-h,T]}: C([T-h,T];\,\dbR^{m\times n})\to C([T-h,T];\,\dbR^{m\times n})$  $(h<\k)$
as the restriction of $\G[\,\cd\,]$ on time interval $[T-h,T]$ in the following sense:
For any $\Theta(\cd) \in C([T-h,T];\,\dbR^{m\times n})$,
\begin{align}
\G_{[T-h,T]}[\Theta(\cd)](t)=-\big[R(t,t)+D(t)^\top P(t,t)  D(t)\big]^{-1}  \big[B(t)^\top  P(t,t)   +D(t)^\top P(t,t)  C(t)\big],
\nonumber\\
 t\in [T-h,T],\label{def-Gamma1}
\end{align}
where $P(\cd,\cd)$ is given by \eqref{lm2.3-eq1'} with the time horizon replaced by $[T-h,T]$.
In this step, we show that $\G_{[T-h,T]}[\,\cd\,]$ is a contraction mapping if $h>0$ is small enough.

\ms
Let $\Theta_1(\cd), \Theta_2(\cd)\in C([T-h,T];\,\dbR^{m\times n})$ satisfy the following property:
$$
\Theta_1(t),\Theta_2(t) \in \Delta_0,\q  t\in [T-h,T].
$$
Taking $\Theta_1(\cd)$ and  $\Theta_2(\cd)$ into \eqref{lm2.3-eq1'}, we get
the corresponding $\Phi_1(\cd,\cd)$, $P_1(\cd,\cd)$, $\Phi_2(\cd,\cd)$, and $P_2(\cd,\cd)$.
By \eqref{pr-th2-eq4} and \eqref{pr-th2-eq7}, we have
\begin{equation}\label{pr-th2-eq8}
\sup_{T-h \leq t \leq s \leq T} \mE\big[ |\Phi_i(t,s) |^2 \big]\leq \cC (\cC^*),\q
\sup_{T-h\leq t \leq T}|P_i(t,t) | \leq \cC (\cC^*),\q i=1,2.
\end{equation}
Then by the standard stability estimates of SDEs, we have
\begin{align}
&\sup_{t\in[T-h,T]}\mE \[\sup_{s \in  [t,T]}   \big| \Phi_1(t,s)-\Phi_2(t,s) \big|^2\]\nonumber\\
&\q\leq\cC(\cC^*)  \int_{T-h}^{T}  |\Theta_1(s)-\Theta_2(s)|^2ds\nonumber\\
&\q\leq \cC(\cC^*)h\sup_{s\in[T-h,T]} |\Theta_1(s)-\Theta_2(s)|^2,\label{pr-th2-eq10}
\end{align}
where the constant $\cC(\cC^*)$ is independent of $h$.
Note that
\begin{align}
&P_1(t,t)-P_2(t,t)\nonumber\\
&\q=\dbE\[ [\Phi_1(t,T)-\Phi_2(t,T)]^\top G(t)\Phi_1(t,T)+\Phi_2(t,T)^\top G(t)[\Phi_1(t,T)-\Phi_2(t,T)]\nonumber\\
&\q\qq + \int_{t}^{T} [\Phi_1(t,s)-\Phi_2(t,s)]^\top \big( Q(t,s)+\Theta(s)^\top R(t,s)\Theta(s)\big)\Phi_1(t,s)ds\nonumber\\
&\q\qq   + \int_{t}^{T} \Phi_2(t,s)^\top \big( Q(t,s)+\Theta(s)^\top R(t,s)\Theta(s)\big)[\Phi_1(t,s)-\Phi_2(t,s)]ds\Big].
\label{pr-th2-eq9}
\end{align}
Then by \eqref{pr-th2-eq8}--\eqref{pr-th2-eq10}, we have
\begin{equation}
|P_1(t,t)-P_2(t,t)|\leq \cC(\cC^*)\sqrt{h}\sup_{s\in[T-h,T]} |\Theta_1(s)-\Theta_2(s)|.
\end{equation}
Recall from \eqref{def-Gamma1} the definition of $\G_{[T-h,T]}[\,\cd\,]$.
Then
\begin{align}
&\sup_{t\in[T-h,T]}\big|\G_{[T-h,T]}[\Theta_1(\cd)](t)-\G_{[T-h,T]}[\Theta_2(\cd)](t)\big|\nonumber\\
&\q\leq\cC(\cC^*)\sqrt{h}\sup_{s\in[T-h,T]} |\Theta_1(s)-\Theta_2(s)|.\label{contration}
\end{align}	
Thus, by taking $h>0$ small enough such that $\cC(\cC^*)\sqrt{h}={1\over 2}$, we know that
$\G_{[T-h,T]}[\,\cd\,]$ is a contraction mapping.

\ms
{\bf Step 3.} In this step, we shall prove that $\G_{[T-h,T]}[\,\cd\,]$
has a unique fixed point $\Th^*(\cd)\in C([T-h,T];\,\dbR^{m\times n})$ with $\Th^*(t)\in\D_0$ for $t\in[T-h]$.
Let
$$\Theta_0(s)= \big[R(T,T)+D(T)^\top
G(T)  D(T)\big]^{-1}  \big[B(T)^\top  G(T)   +D(T)^\top
G(T)  C(T)
\big], \q s\in [T-h,T],$$ and
$$
\Theta_{k+1}(s)= \G_{[T-h,T]}[\Theta_k(\cd)](s),\q s\in [T-h,T],
$$
where $h$ has been determined by Step 2.
From  \eqref{pr-th2-eq2}, we know that the sequence $\{\Theta_k(\cd)\}_{k=0}^{\infty}$ have the following property:
\begin{equation}\label{pr-th2-eq6}
\Theta_k(t) \in \Delta_0,\q   t\in [T-h,T].
\end{equation}
Then  from \eqref{contration}, we have
\begin{equation}\label{pr-th2-eq12}
\|\Theta_{k+2}(\cd)-	\Theta_{k+1}(\cd)\|_{C([T-h,T];\,\dbR^{m\times n})} \leq
{1\over 2}\|\Theta_{k+1}(\cd)-	\Theta_{k}(\cd)\|_{C([T-h,T];\,\dbR^{m\times n})}.
\end{equation}
Thus, we know that $\{\Theta_k(\cd)\}_{k=0}^{\infty}$ is a Cauchy sequence in $C([T-h,T];\,\dbR^{m\times n})$.
Let
$$
\Theta^*(\cd)\deq \lim_{k \to \infty} \Theta_k(\cd),
$$
in $C([T-h,T];\,\dbR^{m\times n})$.
By \eqref{pr-th2-eq6}, we can see
$$
{\Theta^*}(t) \in \Delta_0,\q   t\in [T-h,T].
$$
Then
\begin{align*}
&\big\|\G_{[T-h,T]}[\Theta^*(\cd)] - \Theta^*(\cd) \big\|_{C([T-h,T];\,\dbR^{m\times n})} \\
&\q= \lim_{k \to \infty}  \big\|\G_{[T-h,T]}[\Theta^*(\cd)] - \Theta^*(\cd)
-\G_{[T-h,T]}[\Theta_k(\cd)]+\Theta_{k+1}(\cd)  \big\|_{C([T-h,T];\,\dbR^{m\times n})}\\
&\q\leq\lim_{k \to \infty}  \Big\{   \frac{1}{2} 	\|\Theta_{k}(\cd)-	 \Theta^*(\cd)\|_{C([T-h,T];\,\dbR^{m\times n})}
+\|\Theta_{k+1}(\cd)-	\Theta^*(\cd)\|_{C([T-h,T];\,\dbR^{m\times n})} \Big \}\\
&\q=0,
\end{align*}
which means that $\G_{[T-h,T]}[\,\cd\,]$ has a fixed point $\Theta^*(\cd)$.
Let $\{P^*(t,t);t\in[T-h,T]\}$ be uniquely defined by \eqref{lm2.3-eq1'} with taking $\Th(\cd)=\Th^*(\cd)$.
It follows that $\{P^*(t,t);t\in[T-h,T]\}$ is a  solution to \eqref{lm2.3-eq1} over $[T-h,T]$.
On the other hand, from Proposition \ref{pro:priori-est}, \eqref{pri-the}, and \eqref{pr-th2-eq3'}, we know that the $\Theta(\cd)$ defined in \eqref{def-Theta}, corresponding  to any solution of \eqref{lm2.3-eq1},
must satisfy
\begin{equation*}
	 \Theta(t) \in \Delta_0, \q \forall t\in[T-h,T].
\end{equation*}
This, combined  with the fact that $\G_{[T-h,T]}[\,\cd\,]$  is a
contraction mapping for any  $\Theta(\cd) \in C([T-h,T];\,\dbR^{m\times n})$ satisfying $\Theta(t) \in \Delta_0$  for $ t \in [T-h,T]$
(i.e., \eqref{contration}), implies that the solution of \eqref{lm2.3-eq1} is also unique.
Consequently, we prove the well-posedness of  equation \eqref{lm2.3-eq1} on the time interval $[T-h,T]$.

\ms

{\bf Step 4.}
We shall complete the proof by induction in this step.
Next,  for mapping $\G_{[T-2h,T]}[\,\cd\,]$,   we let
$$
\Theta_0(s)=\Theta^*(T-h)\chi_{[T-2h,T-h]}(s)  + \Theta^*(s)\chi_{(T-h,T]}(s), \q s\in [T-2h,T],
$$
and
$$
\Theta_{k+1}(s)= \G_{[T-2h,T]}[\Theta_k(\cd)](s),\q s\in [T-2h,T].
$$
Note that
$$
\Theta_0(s)\in\D_0,\q s\in[T-2h,T],
$$
and then from the claim in Step 1, we have
$$
\Theta_k(s)\in\D_0,\q s\in[T-2h,T],\q k\geq 1.
$$
Thus, for any $k\geq 0$,
\begin{equation}\label{pr-th2-eq3-k}
\|\Theta_k(\cd)\|_{C([T-2h,T];\,\dbR^{m\times n}) }\leq 3 \cC^*.
\end{equation}
From the definition of $\G_{[T-2h,T]}[\,\cd\,]$ and the fact $\G_{[T-h,T]}[\Theta^*(\cd)]=\Theta^*(\cd)$, we observe that
\begin{equation}\label{pr-th2-eq14}
	\Theta_{k}(s)= \Theta^*(s),\q s\in [T-h,T],\q \forall k \geq 0.
\end{equation}
With the uniform estimates by \eqref{pr-th2-eq8}--\eqref{pr-th2-eq9}, we can prove
\begin{align*}
&\big|\Theta_{k+2}(t)-	\Theta_{k+1}(t)\big|\\
&\q= \big|\G_{[T-2h,T]}[\Theta_{k+1}(\cd)](t) - \G_{[T-2h,T]}[\Theta_{k}(\cd)](t)  \big|\\
&\q\leq 	\cC(\cC^*)  \(\int_{t}^{T}  |\Theta_{k+1}(s)-{\Theta_{k}}(s)|^2ds\)^{1/2}\\
&\q\leq 	\cC(\cC^*)  \(\int_{t}^{T-h}  |\Theta_{k+1}(s)-{\Theta_{k}}(s)|^2ds\)^{1/2},
\end{align*}
where $\cC(\cC^*)$ can be the same as that in \eqref{contration} and the last inequality is obtained from \eqref{pr-th2-eq14}.
Then we have
\begin{align*}
&\|\Theta_{k+2}(\cd)-\Theta_{k+1}(\cd)\|_{C([T-2h,T];\,\dbR^{m\times n})}\\
&\q\leq\cC(\cC^*) \sqrt{h}\|\Theta_{k+1}(\cd)-\Theta_{k}(\cd)\|_{C([T-2h,T];\,\dbR^{m\times n})}\\
&\q\leq  \frac{1}{2}\|\Theta_{k+1}(\cd)-	\Theta_{k}(\cd)\|_{C([T-2h,T];\,\dbR^{m\times n})}.
\end{align*}
From this, we can similarly obtain the well-posedness of equation \eqref{lm2.3-eq1} over $[T-2h,T]$.
Inductively,  we obtain the unique solution of  equation \eqref{lm2.3-eq1} on $[T-\k,T]$.
Then by Lemmas \ref{lm2.3}--\ref{lm2.4} and Remark \ref{rm2.1}, we establish the well-posedness of  ERE \eqref{ERE}
on  $\D^*[T-\k,T]$. Moreover, from \ref{ass:H3}, we know that the unique solution $P(\cd,\cd)\geq 0$.

\end{proof}

\subsection{Global solvability}\label{sec:global}

Combining the local solvability (i.e., Theorem \ref{thm:local-solvability}) with the priori estimate
(i.e., Proposition \ref{pro:priori-est}), we can get the solvability of ERE \eqref{ERE} on the global time interval $[0,T]$.

 \begin{theorem}\label{thm:well-posedness}
Suppose that {\rm \ref{ass:H1}--\ref{ass:H4}} hold.
Then for any $T>0$, the ERE \eqref{ERE} admits  a unique solution  $P(\cd,\cd)\in C(\D^*[0,T];\,\dbS^n_+)$.
Moreover, the function $\Theta(\cd)$ defined by \eqref{def-Theta} belongs to $C([0,T];\,\dbR^{m\times n})$.
 \end{theorem}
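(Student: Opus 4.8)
The plan is a continuation argument that uses Theorem \ref{thm:local-solvability} to get started and Proposition \ref{pro:priori-est} to keep the continuation step from degenerating. By Remark \ref{rm2.1} it suffices to show that the Volterra integro-differential equation \eqref{lm2.3-eq1} has a unique solution $\{P(t,t);\,t\in[0,T]\}\in C([0,T];\,\dbS^n_+)$ on the whole interval; the statement for \eqref{ERE}, and the continuity of the associated $\Th(\cd)$ from \eqref{def-Theta}, then follow from Lemmas \ref{lm2.3}--\ref{lm2.4} and Lemma \ref{lem:ERE-Volterra}. The key observation to exploit is that every quantity controlling the local construction of Theorem \ref{thm:local-solvability} is \emph{uniform in the left endpoint}: the a priori estimate \eqref{pro:priori-est1} remains valid with $\sup_{t\in[0,T]}$ replaced by $\sup_{t\in[\t,T]}$ and the same right-hand side, the constant $\cC^*$ of \eqref{def-C-star} is absolute, the constants $\cC(\cC^*)$ in the continuity and stability estimates behind \eqref{pr-th2-eq0}, \eqref{pr-th2-eq17} and \eqref{contration} depend only on $\cC^*$ and the system parameters, and $A,C,B,D,G,Q,R$ are uniformly continuous on $[0,T]$, resp. $[0,T]^2$, by \ref{ass:H1}. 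Hence the length of a local existence interval does not shrink as its left endpoint approaches $0$.

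The core step is the claim: there is $h>0$, depending only on the system parameters, such that whenever \eqref{lm2.3-eq1} admits a solution $\{P^*(t,t);\,t\in[\t,T]\}\in C([\t,T];\,\dbS^n_+)$ for some $\t\in(0,T]$, it extends to a solution on $[a,T]$ with $a\deq\max\{\t-h,0\}$. First I would note that the diagonal value $P(t,t)$ produced by \eqref{lm2.3-eq1} involves only the values $P(s,s)$ with $s\in[t,T]$ (through $\Phi(t,\cd)$ and through $\Th(s)$ in \eqref{def-Theta}); consequently any solution on $[a,T]$ restricts to a solution on $[\t,T]$, which by the uniqueness already established on $[\t,T]$ (from Theorem \ref{thm:local-solvability} at the first step, and propagated along the construction) must coincide with $\{P^*(t,t)\}$ there, so only the piece on $[a,\t]$ is genuinely unknown. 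Letting $\Th^*(\cd)$ be the strategy of \eqref{def-Theta} attached to $\{P^*(t,t)\}$, Proposition \ref{pro:priori-est} together with \eqref{pri-the} and \eqref{def-C-star} gives $|\Th^*(\t)|\le\cC^*$. Mimicking the proof of Theorem \ref{thm:local-solvability}, I would define $\G_{[a,T]}$ as in \eqref{def-Gamma} on $[a,T]$, set $\Delta_\t\deq\{\Th\in\dbR^{m\times n}:|\Th-\Th^*(\t)|\le 2\cC^*\}$ (so $\Delta_\t\subset\{|\Th|\le 3\cC^*\}$, cf. \eqref{def-D*}), and work in the closed subset of $C([a,T];\dbR^{m\times n})$ of those $\Th(\cd)$ that coincide with $\Th^*(\cd)$ on $[\t,T]$ and take values in $\Delta_\t$ on $[a,\t]$.

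Two properties of $\G_{[a,T]}$ on that set then have to be checked, both being the estimates of Theorem \ref{thm:local-solvability} reread with the uniform constant $\cC(\cC^*)$. For the self-mapping property: if $\Th(\cd)$ belongs to the set, then for $t\in[\t,T]$ the value $P(t,t)$ coming from \eqref{lm2.3-eq1'} sees only $\Th|_{[\t,T]}=\Th^*$, hence equals $P^*(t,t)$, so $\G_{[a,T]}[\Th(\cd)]$ agrees with $\Th^*(\cd)$ on $[\t,T]$, and in particular $\G_{[a,T]}[\Th(\cd)](\t)=\Th^*(\t)$; on $[a,\t]$ the continuity estimates of the type \eqref{pr-th2-eq0} and \eqref{pr-th2-eq17}, combined with the uniform continuity of the coefficients, yield a modulus $\eta(\cd)$ independent of $\t$ and of $\Th(\cd)$ with $|\G_{[a,T]}[\Th(\cd)](t)-\Th^*(\t)|\le\eta(|t-\t|)$, which lands in $\Delta_\t$ as soon as $\eta(h)\le 2\cC^*$. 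For the contraction property: two admissible $\Th_1(\cd),\Th_2(\cd)$ differ only on $[a,\t]$, so the SDE stability estimates behind \eqref{contration} give $\|\G_{[a,T]}[\Th_1]-\G_{[a,T]}[\Th_2]\|_{C([a,T];\dbR^{m\times n})}\le\cC(\cC^*)\sqrt{h}\,\|\Th_1-\Th_2\|_{C([a,T];\dbR^{m\times n})}$, a contraction once $\cC(\cC^*)\sqrt{h}<1$. Taking $h$ to be the minimum of the two resulting thresholds (each depending only on the parameters) and applying the contraction mapping principle produces the extended solution, which is positive semi-definite by \ref{ass:H2} exactly as in Lemma \ref{lem:ERE-Volterra}, and is unique on $[a,T]$ because, by the a priori bound, every solution there lands in the same closed set. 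Iterating the claim, starting from the solution on $[T-\k,T]$ furnished by Theorem \ref{thm:local-solvability}, reaches $[0,T]$ after at most $\lceil T/h\rceil$ steps, and Lemmas \ref{lm2.3}--\ref{lm2.4} and Remark \ref{rm2.1} then transfer everything to \eqref{ERE}.

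I expect the main obstacle to be precisely the \emph{uniform-in-$\t$} self-mapping step, i.e. making sure that $\eta$ and $\cC(\cC^*)$ genuinely do not depend on the base point $\t$. This is where one must invoke Proposition \ref{pro:priori-est} in the form valid on every subinterval $[\t,T]$ --- so that the governing strategy stays in the fixed ball $\{|\Th|\le\cC^*\}$ no matter how far left the construction has already proceeded --- and the uniform continuity of $G,Q,R$ on the compact sets $[0,T]$ and $[0,T]^2$; once these uniformities are in hand, the rest is a bookkeeping repetition of the argument of Theorem \ref{thm:local-solvability}.
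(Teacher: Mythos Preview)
Your proposal is correct and follows essentially the same approach as the paper: a continuation argument that extends the local solution of Theorem~\ref{thm:local-solvability} step by step to the left, using the a~priori bound of Proposition~\ref{pro:priori-est} (and hence \eqref{pri-the}) to keep $|\Th^*(\t)|\le\cC^*$ and thereby guarantee that the self-mapping set $\Delta_\t$ and the contraction constant $\cC(\cC^*)$ are uniform in the base point $\t$. The only cosmetic difference is that the paper keeps the self-mapping scale $\k$ and the contraction scale $h$ separate (mimicking Steps~1--4 of Theorem~\ref{thm:local-solvability} on each new block $[T-(j{+}1)\k,T-j\k]$), whereas you merge them into a single step size $h$; both organizations rely on the same estimates \eqref{pr-th2-eq0}, \eqref{pr-th2-eq17}, and \eqref{contration}.
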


\begin{proof}
We define the mapping $\G:  C([T-\kappa-\e,T];\,\dbR^{m \times n}) \to    C([T-\kappa-\e,T];\,\dbR^{m \times n}) $ as follows
(in which $\k$ has been given by Theorem \ref{thm:local-solvability} and $\e\in (0,T-\k]$ is to be determined later):
For any $\Theta(\cd)\in C([T-\kappa-\e,T];\,\dbR^{m \times n}) $,
\begin{align*}
\G[\Theta(\cd)](t)=-\big[R(t,t)+D(t)^\top P(t,t)  D(t)\big]^{-1}  \big[B(t)^\top  P(t,t)   +D(t)^\top P(t,t)  C(t)\big],\\
 t\in [T-\kappa-\e,T],
\end{align*}
where $P(\cd,\cd)$ is given by \eqref{lm2.3-eq1'} with the time interval replaced by $[T-\kappa-\e,T]$.
Then from Theorem \ref{thm:local-solvability},  we  have for any $\Theta(\cd)\in  C([T-\kappa-\e,T-\kappa];\,\dbR^{m \times n})$
with $\Th(T-\k)=\Th^*(T-\k)$,
$$
\G\big[\Theta(\cd)\chi_{[T-\k-\e,T-\k]}(\cd)+ \Theta^*(\cd)\chi_{(T-\k,T]}(\cd) \big](t)=  \Theta^*(t),\q  t\in [T-\k,T],
$$
where $\Th^*(\cd)$ is the unique fixed point determined by Theorem \ref{thm:local-solvability}.
Recall from \eqref{def-C-star} the definition of $\cC^*$.
Denote
$$
\Delta_1\deq \big \{ \Theta\in \dbR^{m\times n},\q\hbox{with }  \big | \Theta- \Theta^*(T-\k) \big  |\leq 2\cC^* \big \}.
$$
From the priori estimate \eqref{pro:priori-est1} in Proposition \ref{pro:priori-est}, we see that
\begin{align}
|\Theta^*(T-\k)|&=\big|-[R(T-\k,T-\k)+D(T-\k)^\top P(T-\k,T-\k)  D(T-\k)]^{-1}\nonumber\\
&\qq\times  [B(T-\k)^\top  P(T-\k,T-\k)  +D(T-\k)^\top P(T-\k,T-\k)  C(T-\k)]\big|\nonumber\\
&\leq\cC^*.\label{Th(T-k)}
\end{align}
Choosing $\e=\k$, we claim that for any  $\Theta(\cd) \in  C([T-2\kappa,T-\kappa];\,\dbR^{m \times n})$ satisfying $\Th(T-\k)=\Th^*(T-\k)$
and $\Theta(t) \in \Delta_1$ for $t \in [T-2\kappa,T-\kappa]$,
we have
\begin{equation}\label{pr-th2-eq16}
\G\big[\Theta(\cd)\chi_{[T-2\k,T-\k]}(\cd)+ \Theta^*(\cd)\chi_{(T-\k,T]}(\cd) \big](t)\in \Delta_1,\q   t \in [T-2\kappa,T-\kappa].
\end{equation}
Indeed, using the fact $\Theta(t) \in \Delta_1$  for $ t \in[T-2\k,T-\k]$, \eqref{Th(T-k)}, and Proposition \ref{pro:priori-est},
we have
\begin{align}
\nonumber&\big\|\Theta(\cd)\chi_{[T-2\k,T-\k]}(\cd)+ \Theta^*(\cd)\chi_{(T-\k,T]}(\cd)\big\|_{C([T-2\k,T];\,\dbR^{m \times n}) }\\
&\q\nonumber \leq \max\big\{  \|\Theta(\cd)\|_{C([T-2\k,T-\k];\,\dbR^{m\times n}) },\,\, \| \Theta^*(\cd)\|_{C([T-\k,T];\,\dbR^{m \times n}) } \big\}\\
&\q\leq \max \{ 3 \cC^*,\,\cC^* \}\nonumber\\
&\q= 3\cC^*.\label{pr-th2-eq18}
\end{align}
Taking $\Theta(\cd)\chi_{[T-2\k,T-\k]}(\cd)+ \Theta^*(\cd)\chi_{(T-\k,T]}(\cd)$ into \eqref{lm2.3-eq1'},
for $t\in [T-2\k,T-\k]$, we have
\begin{align}
&\big |\G\big[\Theta(\cd)\chi_{[T-2\k,T-\k]}(\cd)+ \Theta^*(\cd)\chi_{(T-\k,T]}(\cd) \big](t) - \Theta^*(T-\k)\big |\nonumber\\
&\q \leq \frac{1}{\delta ^2} \big( |R(t,t)- R(T-\k,T-\k)| +2 | D(t)-D(T-\k) | |P(t,t)| |D(t)| 		\nonumber\\
& \qq   +|D(t)|^2 | P(t,t)-P(T-\k,T-\k) |  \big )\big( |B(t)|+  |D(t)| |C(t)| \big) |P(t,t)|\nonumber  \\
& \qq + \frac{1}{\delta} \big(   | B(t)-B(T-\k) | |P(t,t)|+|B(T-\k)| | P(t,t)-P(T-\k,T-\k)|  \nonumber  \\	
&\qq + |D(t)-D(T-\k)| |P(t,t)| |C(t)|   + |D(T-\k)|  |C(t)|    | P(t,t)-P(T-\k,T-\k) | \nonumber  \\
& \qq + |D(T-\k)|  |P(T-\k)|  |C(t)-C(T-\k)|   \big).
\end{align}
Note that with  \eqref{pr-th2-eq18}, the estimate \eqref{pr-th2-eq17} still holds on $[T-2\k,T-\k]$  for the same $\cC(\cC^*)$.
Combining this with the equicontinuity of the coefficients, we know that the claim above still holds true.
It follows that
\begin{equation*}
\big\|\G\big[\Theta(\cd)\chi_{[T-2\k,T-\k]}(\cd)+ \Theta^*(\cd)\chi_{(T-\k,T]}(\cd) \big](\cd)\big\|_{ C([T-2\kappa,T-\kappa];\,\dbR^{m \times n})}\leq 3\cC^*,
\end{equation*}
holds for any  $\Theta(\cd) \in  C([T-2\kappa,T-\kappa];\,\dbR^{m \times n})$ satisfying $\Th(T-\k)=\Th^*(T-\k)$
and $\Theta(t) \in \Delta_1$ for $t \in [T-2\kappa,T-\kappa]$.
Then by applying the same procedure in the proof of Theorem \ref{thm:local-solvability},
we  can establish the well-posedness of  equation \eqref{lm2.3-eq1}   on $[T-2\k,T]$ piecewisely.
Repeating this process,  the well-posedness of  equation \eqref{lm2.3-eq1} can be extended to the global time interval $[0,T]$.
Finally, using Lemmas \ref{lm2.3}--\ref{lm2.4} and Remark \ref{rm2.1} again, we establish the well-posedness of  ERE \eqref{ERE} on  $\D^*[0,T]$.
\end{proof}

\begin{remark}\rm
Both Theorems \ref{thm:local-solvability} and \ref{thm:well-posedness} are proved by constructing a Picard iteration sequence, which
actually provides a numerical algorithm to compute the solution of \eqref{ERE}.
\end{remark}

\section{The case with non-smooth coefficients}\label{sec:non-smooth}

In this section, we will remove the additional differentiability condition \ref{ass:H4} of the weighting matrices.
Indeed, we can replace \ref{ass:H1} and  \ref{ass:H4} by the following weaker assumptions:
\begin{taggedassumption}{(H1)$^*$}\label{ass:H1'}
The coefficients and the weighting matrices satisfy:
\begin{align*}
&A(\cd),C(\cd)\in L^{\infty}(0,T;\, \dbR^{n\times n}),
\q B(\cd),D(\cd)\in L^{\infty}(0,T;\, \dbR^{n\times m}),\\
&G(\cd)\in C([0,T];\,\dbS^n),\q   Q(\cd,\cd)\in C([0,T]^2; \,\dbS^{n}),
\q R(\cd,\cd)\in C([0,T]^2; \,\dbS^{m}).
\end{align*}
\end{taggedassumption}

\ms

\begin{proposition}\label{pro:non-smooth}
Let {\rm\ref{ass:H1'},  \ref{ass:H2}}, and {\rm \ref{ass:H3}}  hold.
Then the ERE \eqref{ERE} admits  a unique solution  $P(\cd,\cd)\in C(\D^*[0,T];\,\dbS^n_+)$.
Moreover, the function $\Theta(\cd)$ defined by \eqref{def-Theta} belongs to $L^\infty([0,T];\,\dbR^{m\times n})$.
\end{proposition}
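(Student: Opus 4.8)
The plan is to obtain Proposition \ref{pro:non-smooth} from Theorem \ref{thm:well-posedness} by a mollification argument, passing to the limit in the equivalent Volterra integro-differential equation \eqref{lm2.3-eq1} rather than in the ERE directly. First I would approximate the non-smooth data: using Lemma \ref{lm2.5} (with $\cG=G$ and $\cQ=Q+\delta I$, so that \ref{ass:H2}--\ref{ass:H3} give the monotonicity and bounds required) I get sequences $G_n(\cd)\in C^\infty([0,T];\dbS^n)$ and $Q_n(\cd,\cd)$ that are smooth in $t$, satisfy the monotonicity $0\le G_n(t)\le G_n(s)\le\wh G$ and $\delta I\le Q_n(t,s)\le Q_n(r,s)\le\wh Q$, and converge to $G$ (uniformly) and to $Q$ (in the $\int_0^T\sup_{t\le s}|\cdot|^2ds$ sense). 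Similarly mollify $R$ in its first variable to get $R_n(\cd,\cd)$, again preserving $R_n\ge\delta I$, the monotonicity in $t$, and the upper bound $\wh R$, with $\int_0^T\sup_{t\le s}|R_n(t,s)-R(t,s)|^2ds\to0$; for $A,C,B,D\in L^\infty$ I would take standard mollifications $A_n,C_n,B_n,D_n\in C^\infty$ with uniformly bounded $L^\infty$ norms and $L^2$-convergence on $[0,T]$. The mollified data satisfy \ref{ass:H1}--\ref{ass:H4}, so Theorem \ref{thm:well-posedness} yields, for each $n$, a unique $P_n(\cd,\cd)\in C(\D^*[0,T];\dbS^n_+)$ solving the ERE with the $n$-data, equivalently a solution $\{P_n(t,t)\}\in C([0,T];\dbS^n_+)$ of \eqref{lm2.3-eq1} with the corresponding $\Theta_n(\cd)$.

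Next I would extract uniform bounds. The point is that the a priori estimate \eqref{pro:priori-est1} depends on the data only through $|\wh G|$, $|\wh Q|$, $T$, and $\int_0^T(2|A(s)|+|C(s)|^2)ds$; since the $\wh G,\wh Q$ are fixed and $\|A_n\|_{L^\infty},\|C_n\|_{L^\infty}$ are uniformly bounded, we get $\sup_n\sup_{t\in[0,T]}|P_n(t,t)|\le M$ for a constant $M$ independent of $n$. Feeding this back into \eqref{pr-the} (whose constant $\cC^*$ likewise depends only on $\delta$, the uniform bounds on $B,D,C$, and $\wh G,\wh Q$) gives $\sup_n\|\Theta_n(\cd)\|_{L^\infty(0,T;\dbR^{m\times n})}\le\cC^*$. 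With these uniform bounds, standard SDE estimates (as in \eqref{lem:ERE-Proof1}--\eqref{lem:ERE-Proof2}) give $\sup_n\sup_{0\le t\le s\le T}\mE|\Phi_n(t,s)|^2\le\cC$ and a uniform modulus $\sup_{r\le s\le T}\mE|\Phi_n(t,s)-\Phi_n(r,s)|^2\le\cC|t-r|$.

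Then I would prove that $\{\Theta_n(\cd)\}$ (or equivalently $\{P_n(t,t)\}$) is Cauchy. Writing the equations for $\Phi_n$ and $\Phi_m$ and subtracting, the difference is driven by $A_{\Theta_n}-A_{\Theta_m}=(A_n-A_m)+(B_n\Theta_n-B_m\Theta_m)$ and the analogous $C$-term; using the uniform bounds, Gr\"onwall's inequality gives
\begin{equation*}
\sup_{t\le s\le T}\mE|\Phi_n(t,s)-\Phi_m(t,s)|^2\le \cC\Big(\|A_n-A_m\|_{L^2}^2+\|B_n-B_m\|_{L^2}^2+\|C_n-C_m\|_{L^2}^2+\|D_n-D_m\|_{L^2}^2+\|\Theta_n-\Theta_m\|_{L^2}^2\Big).
\end{equation*}
Substituting into the representation \eqref{lm2.3-eq1} for $P_n(t,t)-P_m(t,t)$, together with the convergence of $G_n$, $Q_n$, $R_n$, and the continuity estimate of $\Gamma$ (the contraction estimate \eqref{contration}, which holds with a constant depending only on $\cC^*$ and the uniform parameter bounds), I would deduce on a short interval $[T-h,T]$ that $\|\Theta_n-\Theta_m\|_{C([T-h,T];\dbR^{m\times n})}\le\frac12\|\Theta_n-\Theta_m\|_{C([T-h,T];\dbR^{m\times n})}+(\text{data errors})$, hence $\{\Theta_n\}$ is Cauchy on $[T-h,T]$; iterating over the finitely many subintervals (the length $h$ being uniform in $n$ by the uniform bounds) gives a Cauchy sequence on all of $[0,T]$. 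Let $\Theta^*\in L^\infty(0,T;\dbR^{m\times n})$ be the limit (the $L^\infty$ bound $\cC^*$ is preserved; continuity of the limit follows from Lemma \ref{lem:ERE-Volterra} and Remark \ref{rm4.1} once the limiting $P^*(\cd,\cd)$ is identified, but only $L^\infty$ is claimed). Passing to the limit in \eqref{lm2.3-eq1} (the stochastic integrals converge in $L^2(\Omega)$ by the $\Phi_n$ estimates, the Lebesgue integrals by dominated convergence using the $\int\sup|\cdot|^2$-convergence of $Q_n,R_n$) shows that the limit $\{P^*(t,t)\}$ solves \eqref{lm2.3-eq1} with the original data and the limiting $\Theta^*$; by Lemmas \ref{lm2.3}--\ref{lm2.4} and Remark \ref{rm2.1} this produces the desired solution $P^*(\cd,\cd)\in C(\D^*[0,T];\dbS^n_+)$ of the ERE. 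Uniqueness follows from the same contraction estimate \eqref{contration}, which does not use \ref{ass:H4} and therefore applies verbatim under \ref{ass:H1'}.

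The main obstacle I anticipate is bookkeeping the dependence of all constants on the data so that nothing blows up along the approximation — in particular checking that the local existence length $h$ in Theorem \ref{thm:local-solvability} can be taken uniform in $n$ (this requires that the modulus-of-continuity estimates \eqref{pr-th2-eq5}, \eqref{pr-th2-eq17}, \eqref{pr-th2-eq17'} hold with constants independent of $n$, which in turn needs the $Q_n,R_n$ to have a common modulus of continuity in the first variable). Mollification in $t$ does not in general preserve a uniform modulus of continuity, so here I would instead exploit the monotonicity: since each $Q_n(\cd,s)$ is nondecreasing and bounded between $\delta I$ and $\wh Q$, the relevant quantity $\int_r^T|Q_n(t,s)-Q_n(r,s)|\,ds$ is controlled by $\cC\int_r^T\operatorname{Trace}(Q_n(r,s)-Q_n(t,s))\,ds$, which is uniformly small as $|t-r|\to0$ because the total variation of $s\mapsto\operatorname{Trace}\int\cdots$ is bounded by $T|\wh Q-\delta I|$ uniformly in $n$; an Arzel\`a--Ascoli/Helly-type argument then gives the needed uniform control. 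A secondary technical point is that $A_n,C_n$ obtained by mollification only converge in $L^2$, not uniformly, so everywhere I must use $\int_0^T|A_n-A|\,ds\to0$-type convergence (fine for Gr\"onwall and for the stochastic-integral limits) rather than sup-norm convergence; this is harmless but must be tracked consistently.
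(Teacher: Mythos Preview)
Your overall strategy --- mollify the data to recover \ref{ass:H1}--\ref{ass:H4}, invoke Theorem \ref{thm:well-posedness}, extract uniform bounds from Proposition \ref{pro:priori-est}, prove Cauchy-ness, pass to the limit in \eqref{lm2.3-eq1} --- is exactly the paper's. The difference is in how you establish the Cauchy property, and your route is needlessly indirect.

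You propose to show $\{\Theta_n\}$ is Cauchy by reusing the contraction estimate \eqref{contration} on short subintervals $[T-h,T]$ and iterating, which forces you to worry about a uniform-in-$n$ length $h$ and then about uniform moduli of continuity for $Q_n,R_n,G_n,D_n$. The paper avoids this entirely: it works with $P_n(t,t)-P_m(t,t)$ directly on the whole interval. From the definition of $\Theta_n$ one has $|\Theta_n(t)-\Theta_m(t)|\le \cC(\cC^*)\big(|P_n(t,t)-P_m(t,t)|+\text{data errors at }t\big)$; feeding this into the SDE stability estimate gives $\mE\sup_s|\Phi_n(t,s)-\Phi_m(t,s)|^2\le \cC(\cC^*)\int_t^T|P_n(s,s)-P_m(s,s)|^2ds+\text{data errors}$. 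Substituting both into the representation for $P_n(t,t)-P_m(t,t)$ yields an integral inequality of the form
\[
|P_n(t,t)-P_m(t,t)|^2\le \cC(\cC^*)\int_t^T|P_n(s,s)-P_m(s,s)|^2\,ds+\text{(data errors)},
\]
and a single Gr\"onwall application on $[0,T]$ gives $\|P_n(\cd,\cd)-P_m(\cd,\cd)\|_{C([0,T])}\to 0$. No subinterval decomposition, no Property~(P), no uniform $h$, and in particular no need for equicontinuity of the mollified data --- only the uniform $L^\infty$ bounds enter. Your ``main obstacle'' is therefore self-imposed; even within your framework, note that only the contraction constant $\cC(\cC^*)\sqrt h$ (which depends on bounds, not moduli) needs to be uniform, not the Property~(P) length $\kappa$ from Step~1 of Theorem \ref{thm:local-solvability}.
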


\begin{proof}
We only need to prove that  the Volterra integro-differential equation \eqref{lm2.3-eq1} admits a unique solution.
For each $A(\cd),C(\cd)\in L^{\infty}(0,T;\, \dbR^{n\times n})$ and $B(\cd),D(\cd)\in L^{\infty}(0,T;\, \dbR^{n\times m})$,
we can select sequences $\{A_n(\cd),C_n(\cd)\}_{n\geq 1 } \subset C^{\infty}([0,T];\, \dbR^{n\times n})$ and $\{B_n(\cd),D_n(\cd)\}_{n\geq 1 } \subset C^{\infty}([0,T];\, \dbR^{n\times m})$ such that
\begin{align*}
&\lim_{n\to \infty} \|A_n(\cd)-A(\cd) \|_{L^2(0,T;\, \dbR^{n\times n})}=0,\\
& 	\lim_{n\to \infty}  \|C_n(\cd)-C(\cd)   \|_{L^2(0,T;\, \dbR^{n\times n})}=0,\\
& \lim_{n\to \infty}  \|B_n(\cd)-B (\cd) \|_{L^2(0,T;\, \dbR^{n\times m})}=0,\\
&\lim_{n\to \infty}  \|D_n(\cd)-D(\cd)  \|_{L^2(0,T;\, \dbR^{n\times m})}=0, \\
&  \sup_{n\geq 1} \|A_n(\cd)  \|_{L^{\infty}(0,T;\,\dbR^{n\times n})}\leq  2 \|A(\cd)  \|_{L^{\infty}(0,T;\, \dbR^{n\times n})} ,\\
& \sup_{n\geq 1} \|C_n(\cd)  \|_{L^{\infty}(0,T;\, \dbR^{n\times n})} \leq 2 \|C(\cd)  \|_{L^{\infty}(0,T;\, \dbR^{n\times n})},\\
& \sup_{n\geq 1} \|B_n(\cd)  \|_{L^{\infty}(0,T;\, \dbR^{n\times m})} \leq 2 \|B(\cd)  \|_{L^{\infty}(0,T;\, \dbR^{n\times m})} ,\\
& \sup_{n\geq 1} \|D_n(\cd)  \|_{L^{\infty}(0,T;\, \dbR^{n\times m})}\leq   2 \|D(\cd)  \|_{L^{\infty}(0,T;\, \dbR^{n\times m})}.
\end{align*}
Moreover by Lemma \ref{lm2.5}, there exist $\{G_n(\cd)\}_{n\geq 1} \subset  C^{\infty}([0,T];\,\dbS^n)$, $\{Q_n(s,t)\}_{n\geq 1}\subset C([0,T]^2;\, \dbS^{n})$, and
  $\{R_n(s,t)\}_{n\geq 1}\subset C([0,T]^2;\, \dbS^{m})$ such that
\begin{itemize}
	\item  $ 0 \leq G_n(t)\leq G_n(r) \leq \widehat{G},\q  0\leq t \leq r \leq T,$
	\item  $  \lim_{n\to \infty}\|G_n(\cd)-G(\cd)\|_{C([0,T];\,\dbS^n)}=0,$
	\item  $ \text{for any } 0\leq s \leq T, \,  Q_n(t,s) \text{ and } R_n(t,s)  \text{ are infinitely differentiable for $t$,}$
	\item  $0 \leq Q_n(t,s)\leq Q_n(r,s) \leq \widehat{Q},  \q  0\leq t \leq r \leq s \leq T,$
	\item  $\delta I \leq R_n(t,s)\leq R_n(r,s) \leq \widehat{R},\q  0\leq t \leq r \leq s \leq T,$
	\item  $\lim_{n \to \infty} \int_{0}^{T} \sup_{t\in [0,s]} \big|  Q_n(t,s)  -  Q(t,s)  \big|^2 ds=0,$
	\item  $\lim_{n \to \infty} \int_{0}^{T} \sup_{t\in [0,s]} \big|  R_n(t,s)  - R(t,s)  \big|^2 ds=0.$
\end{itemize}
For any fixed $n$,  by Theorem \ref{thm:well-posedness},
we know that the following Volterra integro-differential equation admits a unique solution:
\begin{equation}\label{pr4.1-pf-eq2}
\begin{cases}\ds
{P_n}(t,t)=\mE \Big[ \Phi_n(t,T)^\top G_n(t)\Phi_n(t,T)+ \int_{t}^{T} \Phi_n(t,s)^\top \big( Q_n(t,s)\\
\ds\qq\qq\qq+\Theta _n(s)^\top
R_n(t,s)\Theta_n(s)\big) \Phi_n(t,s)ds   \Big],  \\
\ns \ds\Phi_n(t,s)=I+\int_{t}^{s} \big[A_n(r)+B_n(r)\Theta_n(r)\big]\Phi_n(t,r)dr\\
\ds\qq\qq\qq+\int_{t}^{s} \big[C_n(r)+D_n(r)\Theta_n(r)\big] \Phi_n(t,r)dW(r),
\end{cases}
\end{equation}
where
\begin{equation}
\Theta_n(t)=-\big[R_n(t,t)+D_n(t)^\top P_n(t,t)  D_n(t)\big]^{-1}  \big[B_n(t)^\top  P_n(t,t)   +D_n(t)^\top
P_n(t,t)  C_n(t)\big].
\end{equation}
Note that $A_n(\cd)$, $B_n(\cd)$, $C_n(\cd)$, $D_n(\cd)$,
$G_n(\cd)$, $Q_n(\cd,\cd)$, and $R_n(\cd,\cd)$ are uniformly bounded.
Then from  Proposition \ref{pro:priori-est}, we know that $P_n(\cd,\cd)$ are uniformly bounded, namely
\begin{equation*}
\sup_{n\geq 1}\sup_{t\in [0,T]}|P_n(t,t)|
\leq \big(  |\widehat{G}|+ T  |\widehat{Q}|  \big)e^{ \int_{0}^{T} ( 4|A(s)| + 4|C(s)|^2)ds}.
\end{equation*}	
From this, we can deduce that
\begin{equation*}
\sup_{n \geq 1}\| \Theta_n(\cd) \|_{C([0,T];\, \dbR^{m\times n})} \leq  \cC(\cC^*),\q  		
\sup_{n \geq 1}\sup_{0\leq t\leq s \leq T} \mE \big[| \Phi_n(t,s)|^2 \big] \leq    \cC(\cC^*),
\end{equation*}
and
\begin{align}
\big| \Theta_n(t)-\Theta_m(t) \big| &\leq   \cC(\cC^*)  \big( | P_n(t,t)-P_m(t,t) |+|R_n(t,t)-R_m(t,t)|
+| B_n(t)-B_m(t)|\nonumber\\
& \qq\qq + | C_n(t)-C_m(t)|+ | D_n(t)-D_m(t)| \big),\label{thm:modi-1}
\end{align}
where $\cC(\cC^*)$ is a general constant depending on $\cC^*$ and system parameters.
This together with the standard estimates for SDEs gives
\begin{align}
&\mE\[\sup_{s\in [t,T]} \big| \Phi_n(t,s)-  \Phi_m(t,s) \big|^2\]\nonumber\\
&\q\leq \cC(\cC^*) \int_{t}^T\[ |A_n(s)-A_m(s)|^2+ |B_n(s)-B_m(s)|^2+ |C_n(s)-C_m(s)|^2\nonumber\\
&\qq\qq\qq\q + |D_n(s)-D_m(s)|^2+ |\Theta_n(s)-\Theta_m(s)|^2 \]ds\nonumber\\
&\q\leq \cC(\cC^*) \int_{t}^T\[ |A_n(s)-A_m(s)|^2+ |B_n(s)-B_m(s)|^2+ |C_n(s)-C_m(s)|^2\nonumber\\
&\qq\qq\qq\q + |D_n(s)-D_m(s)|^2+|R_n(s,s)-R_m(s,s)|^2+ |P_n(s,s)-P_m(s,s)|^2 \]ds.\label{thm:modi-2}
\end{align}
Then we have
\begin{align*}
&\big| P_n(t,t)-  P_m(t,t) \big|\\
&\q \leq \cC(\cC^*) \Big\{   \mE \big[|  \Phi_n(t,T)-  \Phi_m(t,T) |^2\big]^{1\over 2}+ |G_n(t)-G_m(t)|\\
& \qq+\int_{t}^{T} \[  \mE\big[| \Phi_n(t,s)-  \Phi_m(t,s)|^2 \big]^{1\over 2} + |\Theta_n(s)-\Theta_m(s) | \]  ds\\
&\qq+ \int_{t}^{T} \[   \big|R_n(t,s)-R_m(t,s) \big|_{} + \big|Q_n(t,s)-Q_m(t,s) \big| \]  ds\Big\}\\
&\q\leq  \cC(\cC^*) \Big\{ \|G_n(\cd)-G_m(\cd)\|_{C([0,T];\,\dbS^n)}+ \int_{0}^{T}  \sup_{t\in [0,s]} |   Q_n(t,s)-  Q_m(t,s)|  ds\\
& \qq+ \int_{0}^{T} \sup_{t\in [0,s]} |   R_n(t,s)-  R_m(t,s)|ds
+  \mE \big[|  \Phi_n(t,T)-  \Phi_m(t,T)|^2 \big]^{1\over2}\\
&\qq +\int_{t}^{T} \[ \mE   \big[|  \Phi_n(t,s)-  \Phi_m(t,s) |^2 \big]^{1\over 2} + |\Theta_n(s)-\Theta_m (s) |\]  ds\Big\}.
\end{align*}
Substituting \eqref{thm:modi-1} and \eqref{thm:modi-2} into the above,
we get
\begin{align*}
& \big| P_n(t,t)-  P_m(t,t) \big|^2\\
& \q\leq \cC(\cC^*) \Big\{ \|G_n(\cd)-G_m(\cd)\|^2_{C([0,T];\,\dbS^n)}
+ \(\int_{0}^{T} \sup_{t\in [0,s]} \big|   Q_n(t,s)-  Q_m(t,s)  \big|^2  ds\) \\
&  \qq\q+  \( \int_{0}^{T} \sup_{t\in [0,s]} \big|   R_n(t,s)-  R_m(t,s)  \big|^2 ds\)
+  \int_{t}^{T} \( |A_n(s)-A_m(s)|^2\\
&\qq\q + |B_n(s)-B_m(s)|^2+ |C_n(s)-C_m(s)|^2 + |D_n(s)-D_m(s)|^2 \\
&\qq\q+|P_n(s,s)-P_m(s,s)|^2 \)ds\Big\}.
\end{align*}
By Gr\"{o}nwall's inequality, we can get
\begin{align*}
& \big\| P_n(t,t)-  P_m(t,t) \big\|^2_{ C([0,T];\, \dbR^{n\times n})}\\
& \q\leq \cC(\cC^*) \Big\{ \|G_n(\cd)-G_m(\cd)\|^2_{C([0,T];\,\dbS^n)}
+ \(\int_{0}^{T} \sup_{t\in [0,s]} \big|   Q_n(t,s)-  Q_m(t,s)  \big|^2  ds\) \\
&  \qq+  \( \int_{0}^{T} \sup_{t\in [0,s]} \big|   R_n(t,s)-  R_m(t,s)  \big|^2 ds\)
+  \int_{0}^{T} \( |A_n(s)-A_m(s)|^2\\
&\qq + |B_n(s)-B_m(s)|^2+ |C_n(s)-C_m(s)|^2 + |D_n(s)-D_m(s)|^2  \)ds\Big\}.
\end{align*}
Thus $\{P_n(t,t);\,t\in[0,T]\}_{n\geq 1}$ is a Cauthy sequence in $C([0,T];\, \dbR^{n\times n})$.
Taking $n \to \infty$ in \eqref{pr4.1-pf-eq2},  we obtain one solution for equation \eqref{lm2.3-eq1} for the case:
$A(\cd),C(\cd)\in L^{\infty}(0,T;\, \dbR^{n\times n})$, $B(\cd),D(\cd)\in L^{\infty}(0,T;\, \dbR^{n\times m})$,
 $	G(\cd)\in C([0,T];\,\dbS^n)$, $Q(\cd,\cd)\in C([0,T]^2;\, \dbS^{n})$, and   $R(\cd,\cd)\in C([0,T]^2; \,\dbS^{m})$.
The uniqueness of the  solution for  equation \eqref{lm2.3-eq1}  is standard.
\end{proof}

\section*{Appendix}

\emph{\textbf{Proof of Lemma \ref{lm2.5}.}}
Let $\eta(\cd)$ be the standard mollifier,
\begin{equation}
\eta (t)=\begin{cases}
\cC_0 e^{\frac{1}{t^2-1}}, &|t|\leq 1,\\
0,& |t|>1,
\end{cases}
\end{equation}
where $\cC_0$ satisfies $\int_{\dbR}\eta(t)dt=1$.  Denote $\eta_{1/n}(t)\deq  n \eta(nt)$.

\ms
	
First we  extend the domain  of $\cG(\cd)$ to $\dbR$. Let
\begin{equation*}
\bar{\cG}(t)=\begin{cases}
\int_{\dbR}\cG(0)\chi_{[-2T,0]}(s)\eta_{T/4}(t-s)ds,\q & t\in (-\infty,-T],\\
\cG(0),\q & t\in [-T,0],\\
\cG(t),\q & t\in [0,T],\\
\cG(T),\q & t\in [T,2T],\\
\int_{\dbR}\cG(T)\chi_{[T,3T]}(s)\eta_{T/4}(t-s)ds,\q & t\in [2T,+\infty).
\end{cases}
\end{equation*}
We know that $\bar{\cG}(\cd)$ belongs to $C_c(\dbR;\,\dbS^n)$ and is monotonically increasing over $[-T,2T]$.
Set
\begin{equation}\label{pr-lm2.5-eq1}
\cG_n(t)\= \int_{\dbR} \bar{\cG}(s)\eta_{1/n} (t-s)ds,\q t\in \dbR.
\end{equation}
We know that (see \cite[pp. 713--714]{Evans-2010}, for example) $\cG_n(\cd) \in C_c^{\infty}(\dbR;\,\dbS^n)$ and
\begin{equation*}
\lim_{n\to \infty}\|\cG_n(\cd)-\cG(\cd)\|_{C([0,T];\,\dbS^n)} =0.
\end{equation*}
Since for any $x\in\dbR^n$ and $t\in [0,T]$, it holds that
\begin{align*}
0&\leq x^\top 	\cG_n(t) x =  \int_{[-1/n,1/n]}  x^\top \bar{\cG}(t-s) x\, \eta_{1/n} (s)ds\\
&\leq  \int_{\dbR}  x^\top \widehat{G} x\, \eta_{1/n} (t-s)ds= x^\top 	\widehat{G} x,
\end{align*}
we can get
\begin{equation*}
	0\leq\cG_n(t) \leq \widehat{G},\q t\in [0,T].
\end{equation*}
Besides,  for any $x\in \dbR^n$ and $0\leq t_1\leq t_2 \leq T$,   the following holds for large $n$,
\begin{align}
&x^\top \cG_n(t_2) x-x^\top \cG_n(t_1) x\nonumber\\
&\q= \int_{[-1/n,1/n]} x^\top \big[\bar{\cG}(t_2-s) -\bar{\cG}(t_1-s)\big] x\,  \eta_{1/n} (s)ds \nonumber\\
&\q\geq 0.\label{app-eq1}
\end{align}
This implies that
\begin{equation*}
	\cG_n(t)\leq \cG_n(s),\q 0\leq t\leq s \leq T.
\end{equation*}

\ms

Next we extend the domain of $\cQ(\cd,\cd)$ to $\{(t,s)\,| \, 0\leq s \leq T, -\infty < t < +\infty\}$.
Let
\begin{align*}
	&\bar{\cQ}(t,s)\=\begin{cases}
		\int_{\dbR}\cQ(0,s)\chi_{[-2T,0]}(r)\eta_{T/4}(t-r)dr,&  (t,s)\in \{  (t,s)\,|\, 0\leq s \leq T,-\infty <t\leq  -T \},\\
		\cQ(0,s),&    (t,s)\in \{  (t,s)\,|\, 0\leq s \leq T,-T \leq t\leq 0 \},\\
	    \cQ(t,s), & (t,s)\in \{  (t,s)\,|\, 0\leq s \leq T,0\leq t \leq s \},\\
		\cQ(s,s), &   (t,s)\in \{  (t,s)\,|\, 0\leq s \leq T,s \leq t\leq 2T \},\\
        \int_{\dbR}\cQ(s,s)\chi_{[T,3T]}(r)\eta_{T/4}(t-r)dr,&  (t,s)\in \{  (t,s)\,|\, 0\leq s \leq T,2T \leq t< +\infty \}.
	\end{cases}
\end{align*}
Similar to the case of $\cG(\cd)$,  for any fixed $s\in [0,T]$,
we know that $\bar{\cQ}(\cd,s)$ belongs to $C_c(\dbR;\,\dbS^n)$ and is monotonically increasing over $[-T,2T]$.
Set
\begin{equation*}
	\nonumber\cQ_n(t,s)\= \int_{\dbR} \bar{\cQ}(r,s)\eta_{1/n} (t-r)dr,\q  (t,s) \in  \dbR\times [0,T].
\end{equation*}
We know that, for any fixed $s\in [0,T]$,  $\cQ_n(\cd,s) \in C_c^{\infty}(\dbR;\,\dbS^n)$ and
\begin{equation*}
	\lim_{n\to \infty}\sup_{t\in [0,s]}|\cQ_n(t,s)-\cQ(t,s)| =0,\q \forall s\in [0,T].
\end{equation*}
Similar to \eqref{app-eq1}, we have
\begin{equation*}
	\cQ_n(t,s)\leq \cQ_n(r,s),\q 0\leq t \leq r \leq s \leq T.
\end{equation*}
Note for any $x\in\dbR^n$ and $0\leq t  \leq s \leq T$, it holds that
\begin{align*}
x^\top	\delta I  x &\leq \int_{[-1/n,1/n]}  x^\top \bar{\cQ}(t-r,s) x\, \eta_{1/n} (r)dr\\
&\leq  \int_{\dbR}  x^\top \widehat{Q} x\, \eta_{1/n} (t-s)ds\\
&= x^\top 	\widehat{Q} x.
\end{align*}
Therefore
\begin{equation*}
	\delta I \leq\cQ_n(t,s) \leq \widehat{Q},\q 0\leq t  \leq s \leq T.
\end{equation*}
Consequently, by the dominated convergence theorem, we have
\begin{align*}
\lim_{n\to \infty}\int_{0}^{T} \sup_{t\in [0,s]}
\big|  \cQ_n(t,s)  -  \cQ(t,s) \big|^2
 ds=0.
\end{align*}

\end{document}